\theoremstyle{remark}
\crefname{assumption}{Assumption}{Assumptions}
\crefname{claim}{Claim}{Claims}
\crefname{equation}{}{}  
\Crefname{equation}{Equation}{Equations}
\DeclareMathOperator{\cost}{cost}
\DeclareMathOperator{\dev}{dev}
\newcommand{\OPT}{\operatorname{OPT}}
\newcommand{\Cs}{\mathcal{C}}
\newcommand{\support}{\mathrm{supp}}
\newcommand{\Kappa}{\mathrm{K}}
\newcommand{\lambdamax}{\lambda^{+}(\apdmatrices)}
\newcommand{\lambdamin}{\lambda^{-}(\apdmatrices)}
\newcommand{\AK}{{\apdmatrices}}
\newcommand{\error}{\mathcal{V}}
\newcommand{\coresetX}{\tilde{X}}
\newcommand{\coresetOmega}{\tilde{\Omega}}
\newcommand{\coresetC}{\tilde{C}}
\newcommand{\coresetinstance}{\tilde{I}}
\newcommand{\extension}{g}
\newcommand{\Ecc}{\mathcal{E}}
\newcommand{\wcaa}{$\mathbb{WCA}$ assignment\xspace}
\newcommand{\wcac}{$\mathbb{WCA}$ clustering\xspace}
\newcommand{\wcc}{$\mathbb{WC}$ clustering\xspace}
\newcommand{\wca}{$\mathbb{WC}$ assignment\xspace}
\newcommand{\apd}{\mathcal{P}}
\newcommand{\apdmatrices}{\mathcal{A}}
\newcommand{\apdmatrix}{A}
\newcommand{\apdweight}{\gamma}
\newcommand{\apdweights}{\Gamma}
\newcommand{\apdsites}{S}
\newcommand{\apds}{s}
\DeclareMathOperator*{\argmin}{argmin} %
\DeclarePairedDelimiterX{\norm}[1]{\lVert}{\rVert}{#1}
\newcommand{\R}{\mathbb{R}}
\newcommand{\N}{\mathbb{N}}
\newcommand{\leqnomode}{\tagsleft@true\let\veqno\@@leqno}
\newcommand{\reqnomode}{\tagsleft@false\let\veqno\@@eqno}
\pgfplotsset{compat=newest} 
\begin{document}
	\selectlanguage{english}
	\title{On resolution coresets for constrained clustering}
	\author{Maximilian Fiedler, Peter Gritzmann, Fabian Klemm}
	
	\publishers{\vspace*{4ex}%
		\normalfont\normalsize%
		\parbox{0.8\linewidth}{%
			\textbf{Abstract.} 
Specific data compression techniques, formalized by the concept of coresets, proved to be powerful for many optimization problems. In fact, while tightly controlling the approximation error, coresets may lead to significant speed up of the computations and hence allow to extend algorithms to much larger problem sizes. The present paper deals with a weight-balanced clustering problem from imaging in materials science. Here, the class of desired coresets is naturally confined to those which can be viewed as lowering the resolution of the data. Hence one would expect that such resolution coresets are inferior to unrestricted coreset. We show, however, that the restrictions are more than compensated by utilizing the underlying structure of the data. In particular, we prove bounds for resolution coresets which improve known bounds in the relevant dimensions and also lead to significantly faster algorithms practice.\newline

\vspace{-.2cm}
\textbf{Keywords:} Constrained clustering, coresets, diagrams, resolution, compression, grain mapping, imaging\newline

\vspace{-.2cm}
\textbf{Mathematics Subject Classification:} 90, 05, 52, 68 

		}	
	}
	
	\maketitle

%\newpage

\section{Introduction}\label{sec:introduction}
Many challenging practical problems require the solution of weight-balanced clustering problems on huge data sets. Fields of applications include agriculture (consolidation of farmland, see e.g. \cite{BBG14}), transportation (air cargo prediction, \cite{BG20}), medicine (drug response rate analysis, see \cite{BG21}, \cite{SWJFCJB21}) or political sciences (electoral districting, \cite{BGK17}).

The present paper is more specifically motivated by the goal to provide highly efficient yet provably precise mathematical tools for studying the grain growth and structure of polycrystalline materials. In fact, \cite{philmag} applied constrained clustering formulations in order to compute generalized balanced power diagrams (GBPD) for concise representations of grain maps. While according to several independent studies, including \cite{sedivy16, sedivy17,Spettl2016}, the GBPD-algorithm seems to reflect the physical principles of forming polycrystals quite well, its computational cost exceed reasonable, let alone real-time requirements for large but practically relevant data sets. More specifically, dynamic simulations of grain growth at a small temporal resolution quickly drive (implementations based on the uncompressed data of) the method over the limit. Hence, currently, such simulations are typically based on heuristics, see, in particular, \cite{Lyckegaard2011, teferra18} and \cite{AFGK22b}. 

As proved in \cite{FG22} and experimentally verified in \cite{AFGK22a}, significant acceleration can already be obtained by the use of \enquote{general-purpose} coresets for constrained clustering. Such general coresets work on arbitrary finite data sets; hence ignore the special pixel- or voxel-based structure of the data in our specific and other image processing applications. 

The present paper addresses the question whether the resolution based image structure can be utilized to improve the known best bounds for (deterministic) coresets in spite of the additional requirement that such coresets should maintain a Cartesian product structure and hence can simply be interpreted as lower resolution images. 
We prove that \emph{resolution coresets} are indeed smaller in the relevant dimensions. Phrased differently, we show that even when applied at a relatively low resolution, the clustering techniques will still produce near-optimal weight-constrained clusterings. In fact, our bounds on the size of resolution coresets prove that they can be used to safely thin out the given data beyond what has been shown before, and hence put experimental observations of \cite{AFGK22a} on a firm theoretical ground.

The paper is organized as follows. First, \cref{sec:preliminaries_resolution_coreset} formally introduces the problems and states our main results. \Cref{sec:resolution_coreset_prelimiaries}  describes relevant concepts and provides some required preliminaries. Sections \ref{sec:proof1D}, \ref{sec:proof-d} and \ref{sec:approx-resolution-proof} then study resolution coresets in detail and prove our main results. \cref{sec:conclusion} concludes with final remarks.

\section{Motivation, notation, and main results}\label{sec:preliminaries_resolution_coreset}

We will now provide the relevant general notation, introduce the problems and state our main results. We begin with a brief nontechnical description of our \enquote{running application} of grain mapping. Formal details will follow later.

\subsubsection*{Grain mapping}\label{sec:grain}
Mathematical models play an important role for providing tools which can be used to understand the grain growth and structure of polycrystalline materials. Mathematically, grain maps are dissections of the occupied spatial region into monocrystalline cells.  Here we focus on a specific approach, introduced in \cite{philmag}, for efficiently describing such dissections. In fact, parameters characterizing a diagram representation of the grain map are determined as the solutions of an optimization problem. In effect, the points which represent the voxels of the desired grain image at a given resolution are clustered in such a way that certain characteristics of the resulting dissection coincide with available tomographic measurements.

A natural question is how low a resolution can be chosen while still keeping a high accuracy of representation. As the underlying resolution determines the dimension of the clustering problem, this is of utmost importance for practical computations as it may determine whether desired clusterings can actually be computed in practice.

\subsubsection*{Resolution}\label{sec:resolution}

In the following, we focus on pixel- or voxel-based images. While we are mainly interested in the dimensions $d=2,3$, we will give results in arbitrary dimensions. Hence we will use \emph{voxel} as generic term, independently of the dimension $d \in \N$ of space. Typically, we consider an image in $[0,1]^d$. Each axis is partitioned into intervals of equal length, and each voxel is the Cartesian product of one such interval on each axis. 

Formally, let $\bm\rho=(\rho_1,\ldots,\rho_d) \subset \N^d$ be a vector. We consider the partitioning of the $d$-dimensional cube $[0,1]^d$ into uniform boxes of size $2^{-\rho_1} \times \ldots \times 2^{-\rho_d}$. Then we obtain a data set $X(\bm \rho)$ (which will later be clustered into sets which represent the gains) by replacing each box by its centroid, i.e., $X(\bm \rho)$ consists of all points
\begin{equation*}
x=x_{j_1,\ldots,j_d} = \left(
\begin{array}{c}
\displaystyle \frac{1}{2^{\rho_1+1}} + \frac{j_1-1}{2^{\rho_1}} \\
\vdots\\
\displaystyle\frac{1}{2^{\rho_d+1}} + \frac{j_d-1}{2^{\rho_d}}\\
\end{array}
\right)
= \left(
\begin{array}{c}
\displaystyle \frac{-1}{2^{\rho_1+1}} + \frac{j_1}{2^{\rho_1}} \\
\vdots\\
\displaystyle\frac{-1}{2^{\rho_d+1}} + \frac{j_d}{2^{\rho_d}}\\
\end{array}
\right),
\end{equation*}
where, using the notation $[\ell]=\{1,\ldots,\ell\}$ for $\ell\in \N$,
\begin{equation*}
j_1\in [2^{\rho_1}], \ldots, j_d\in [2^{\rho_d}].
\end{equation*}
Note that 
\begin{equation*}
|X(\bm\rho)|= 2^{\|\bm\rho\|_1} = 2^{\sum_{i=1}^d \rho_i}.   
\end{equation*} 
Of course, the voxels of the image are the boxes
\begin{equation*}
x_{j_1,\ldots,j_d}
+ \frac{1}{2^{\rho_1+1}} [-1,1] \times \ldots \times \frac{1}{2^{\rho_d+1}} [-1,1]
\end{equation*}
of volume 
\begin{equation*}
   \nu=\nu (\bm \rho)=\frac{1}{2^{\rho_1}} \cdot \ldots \cdot \frac{1}{2^{\rho_d}}.
\end{equation*}

The spatial resolution of such a discretization of $[0,1]^d$ in each coordinate $i$ is $2^{\rho_i}$. For simplicity, we speak of the resolution $\bm{\rho}$.
In the following we will refer to point sets of the form $X(\bm \rho)$ as \emph{Cartesian point sets} or \emph{resolution set}. 

The paper addresses the error which occurs if we base the optimization on a coarser resolution $\bm \tau= (\tau_1,\ldots,\tau_d)$. We will adopt the following notation: $\bm\rho$ is the given \emph{benchmark resolution} with which we will compare all results for \emph{coarser resolutions} $\bm \tau$, i.e., $\bm \tau \le \bm \rho$. (Inequalities involving vectors are always meant componentwise.) 

\subsubsection*{Weight-constrained clustering}\label{sec:clusterings}
Next we introduce the constrained clustering problem. While we are specifically interested in Cartesian point sets the following description is more general.

Given $x_1,\ldots,x_n\in \R^d$ and associated weights $\omega_1,\ldots,\omega_n\in (0,\infty)$, let $(X,\Omega)$ denote the weighted data set of the tuples $(x_j,\omega_j)$ for $j\in [n]$. Further, let $k\in \N\setminus \{1\}$.

A $k$-\emph{clustering} (or, if $k$ is clear, simply a \emph{clustering}) $C$ of $(X,\Omega)$ is a vector
$$
C = (C_1,\ldots,C_k)=(\xi_{11},\ldots, \xi_{1n}, \ldots, \xi_{k1}, \ldots, \xi_{kn}) \in [0,1]^{kn}
$$ 
whose components $\xi_{ij}$ specify the fraction of $x_j$ that is assigned to the \emph{$i$th cluster} $C_i=(\xi_{i1},\ldots, \xi_{in})$. The set of all such clusterings will be denoted by $\Cs(k, X,\Omega)$.
If all $\xi_{ij}$ are integer, i.e., $\xi_{ij}\in \{0,1\}$, the clustering is called \emph{integer}.

Suppose that approximations $\kappa_1,\ldots,\kappa_k$ of the desired volumes of the cells are given; collected in the family $\Kappa=\{\kappa_1,\ldots,\kappa_k\}$. Then, we also demand that the \emph{weight} 
$$\omega(C_i)= \sum_{j=1}^{n} \xi_{ij}\omega_j
$$
of each cluster $C_i$ is close to $\kappa_i$. More precisely, 
for given $\epsilon_{i} \in (0,\infty)$, we require that the weak weight-constraints
\begin{equation*}
 (1-\epsilon_{i})\kappa_{i}  \le w(C_i) \le (1+\epsilon_{i})\kappa_{i} \qquad  (i \in [k])
\end{equation*}
hold. 
In the specific situation of grain mapping, $X=X(\bm{\rho})$, all weights are identical, in fact,
$\omega_i=\omega_i(\bm \rho)=\nu(\bm \rho)$ and, thus, the cluster weights of all integer clusterings are multiples of $\omega_i(\bm \rho)$. Hence with measurement errors $\epsilon_i< \nicefrac{1}{2}$ the weak weight-constraints are equivalent to the strong \emph{weight-constraints}
\begin{equation*}
    w(C_i) = \kappa_{i} \qquad (i \in [k])
\end{equation*}
As the measurement errors are generally determined by the benchmark resolution anyway we will in the following (just for the simplicity of exposition) always require this stronger condition. The set of all such \emph{weight-constrained clusterings} will be denoted by  $\Cs_\Kappa(k, X,\Omega)$. 
Note that, $\Cs_\Kappa(k, X,\Omega)\ne \emptyset$ if and only if
\begin{equation*}
    \sum_{i=1}^k \kappa_i = \sum_{j=1}^n \omega_j,
\end{equation*}
and we will assume that this is the case. 
As it is well-known (see e.g. \cite{BG12}), when all weights are $1$ or, equivalently, are identical, there always exist optimal clusterings $C\in \Cs_\Kappa(k, X,\Omega)$ which are integer.

The quality of a clustering will be measured in terms of deviations from $k$ different \emph{sites} $s_1,\ldots,s_k$, with respect to \emph{ellipsoidal norms}. More precisely, for $i\in [k]$, let  $A_{i}\in \R^{d\times d}$ be a positive definite symmetric matrix, and let $\norm{\,\cdot\,}_{A_i}$ denote the associated norm, i.e.
$\norm{x}_{A_i}=\sqrt{x^TA_ix}$ for any $x\in \R^d$. Further, we set $\apdsites=\{s_1,\ldots,s_k\}$ and $\apdmatrices=\{A_{1},\ldots,A_{k}\}$. Let us point out that $\apdmatrices$ is a family, i.e., repetitions are allowed. 

The \emph{cost} of a clustering $C$ with respect to $\apdsites$ and $\apdmatrices$ is defined as
\begin{equation*}
	\cost_\AK(X,C,\apdsites) = 
	\displaystyle \sum_{i=1}^{k}\sum_{j=1}^{n} \xi_{ij} \omega_j \norm{x_{j}-s_{i}}_{\apdmatrix_i}^{2}. 
\end{equation*}

A case of special relevance is that with all matrices being the unit matrix $E_d$. Then $\mathcal{E}=\{E_d,\ldots,E_d\}$ refers to the classic Euclidean \emph{least-squares} objective, in which each cluster norm is $\norm{x}_2=\norm{x}_{E}$. In the realm of grain mapping, this is called the \emph{isotropic case} while the general situation models the \emph{anisotropic case}. In the isotropic situation we drop the subscript and simply write  $\cost(X,C,\apdsites)$ rather than $\cost_\mathcal{E}(X,C,\apdsites)$.

If no sites $\apdsites$ are given, we measure the cost of a clustering $C$ with respect to the \enquote{best sites} (see \cref{rem:batching-error}), i.e., the cluster \emph{centroids}
\begin{equation*}
    c_i = \frac{1}{\omega(C_i)}\sum_{j=1}^n \xi_{ij} \omega_j x_j,
\end{equation*}
and define
\begin{equation*}
    \cost_\AK(X,C) = 
	\displaystyle \sum_{i=1}^{k}\sum_{j=1}^{n} \xi_{ij} \omega_j \norm{x_{j}-c_{i}}_{\apdmatrix_i}^{2}. 
\end{equation*}

Grain volumes $\Kappa$, sites $\apdsites$ and matrices $\apdmatrices$ can often be measured directly (with sites approximating the centroids). Different norms then reflect measured knowledge about the moments of the grains in materials, see \cite{philmag}. In the (rare) cases that grain scans are explicitly available the volumes $\Kappa$ can be derived directly, sites can be computed as centroids and the matrices can be estimated as the inverse of covariance matrices. However, both, $\apdsites$ and $\apdmatrices$ may also be subject to optimization to reduce the classification error; see \cite{AFGK22a}. Here we are dealing with the more standard situation, which leads to the mathematical problems of finding optimal weight-constrained anisotropic assignments or clusterings:
\begin{itemize}
 \item[] \emph{\wcaa}: \\ Given $k,X,\Omega,\apdmatrices,\Kappa,S$, find $C\in \Cs_\Kappa(k,X,\Omega)$ that minimizes $\cost_\AK(X,C,S)$.
 \item[] \emph{\wcac}: \\ Given $k,X,\Omega,\apdmatrices,\Kappa$, find $C\in \Cs_\Kappa(k,X,\Omega)$ that minimizes $\cost_\AK(X,C)$.
\end{itemize}
We say that $(k,X,\Omega,\apdmatrices,\Kappa,S)$ or  $(k,X,\Omega,\apdmatrices,\Kappa)$ is an instance of \wcaa or \wcac, respectively. If we are in the Euclidean case, i.e., when $\apdmatrices = \Ecc = \{E_d,\ldots,E_d\}$, we just speak of \emph{weight-constrained clustering (\wcc)} or \emph{assigment (\wca}) and often drop $\apdmatrices$ from the instance.

\subsubsection*{Coresets}\label{sec:coresets}
The notion of a coreset is used to classify the effect of data reductions on the quality of solutions of hard optimization problems. As it is well-known, even unconstrained least-squares clustering is NP-hard, \cite{Dasgupta2008, Aloise2009}, and the NP-hardness persists even if the dimension is fixed to $d=2$, \cite{Mahajan2012}. Also, by \cite{Awasthi2015}, no PTAS in both, $d$ and $k$, exists. Of course, when $\apdsites$ is fixed, the computation of
\begin{equation*}
    \cost_\AK(X,\apdsites)=\min_{C\in \Cs_\Kappa(k, X,\Omega)} \cost_\AK(X,C,\apdsites)
\end{equation*}
boils down to a linear program and can hence be carried out in polynomial time. But even then, typical real-world instances in materials science are so large that they can be solved only on very sparse grids. Therefore, one is aiming at a significant speed up by resorting to a \enquote{compressed} set $(\tilde{X},\tilde{\Omega})$ without sacrificing much of the clustering quality. 

More precisely, let $\epsilon \in (0,\nicefrac{1}{2}]$ and $\delta \in [1,\infty)$. Then $(\coresetX,\coresetOmega)$ is an \emph{$(\epsilon,\delta)$-coreset} for $(k,X,\Omega,\Kappa)$ if there exists a mapping $\extension:\Cs_\Kappa(k,\coresetX,\coresetOmega) \rightarrow \Cs_\Kappa(k,X,\Omega)$, called \emph{extension}, and real constants $\Delta^{+},\Delta^{-}$, referred to as \emph{$\Delta$-terms} or \emph{offsets}, with $0\le \Delta^{+} \le \delta \cdot \Delta^{-}$ such that the following two conditions hold for all sets $S$ of $k$ sites and clusterings $\coresetC\in  \Cs_\Kappa(k,\coresetX,\coresetOmega)$:
	\begingroup\leqnomode
	\begin{alignat}{3}
	(1-\epsilon)\cost_\AK(X,\extension(\coresetC),S) &\le \cost_\AK(\coresetX,\coresetC, S) + \Delta^{+},  && \tag{a} \label{def:property_a} \\
	\cost_\AK(\coresetX,S) + \Delta^{-} &\le (1+\epsilon)\cost_\AK(X, S). \;  &&  \tag{b} \label{def:property_b}
	\end{alignat}
	\endgroup	
If the two $\Delta$-terms coincide, i.e. $0\le \Delta^{+} = \Delta^{-}$, we can choose $\delta=1$, and we speak of an \emph{$\epsilon$-coreset} then. Note that, in this situation, for $\coresetC^*\in \argmin \{ \cost_\AK(\coresetX,\coresetC,S):  \coresetC\in \Cs_\Kappa(k,\coresetX,\coresetOmega)\}$, 
\begin{equation*}
(1-\epsilon)\cost_\AK(X, S)\le (1-\epsilon)\cost_\AK(X,\extension(\coresetC),S) 
 \le (1+\epsilon)\cost_\AK(X, S).
\end{equation*}
Hence, if the extension $g$ can be computed efficiently, solving the assignment problem on the coreset $(\coresetX,\coresetOmega)$ leads to an approximate solution on the original data set $(X,\Omega)$.
It is less obvious but still true that the above general definition of coresets indeed captures the intuition behind the concept that an approximate solution of the clustering problem on the coresets leads to an approximate solution on the original data set $(X,\Omega)$. As a service to the reader this result of \cite[Thm. 3.5]{FG22} will be restated in full generality (as \cref{prop:approximation_using_coreset}) in \cref{sec:approx-resolution-proof} (where we use a special case in the proof of \cref{th:approx-resolution} anyway).

Coresets for unconstrained least-squares clustering have been studied intensively, see \cite{Har-Peled2007,Feldman2013,FSS20,Fichtenberger2013,Sohler2018,Bachem2017a}. In particular, \cite{Har-Peled2007} constructed coresets of size $\mathcal{O}\bigl(\frac{k^3}{\epsilon^{d+1}}\bigr)$ for unconstrained least-squares clustering which \enquote{live} on $\mathcal{O}(k)$ pencils. Subsequently, \cite{FG22} improved and generalized their construction, leading to \emph{pencil coresets} of size only $\mathcal{O}\bigl(\frac{k^2}{\epsilon^{d+1}}\bigr)$ even for weight-constrained anisotropic clustering. In the following more precise statement of this result, $\lambdamax$ and $\lambdamin$ denote the largest and smallest eigenvalue of all matrices in the set $\apdmatrices$. 

\begin{proposition}[ {\cite[Thm. 2.3]{FG22}} ] 
\label{prop:smaller_coreset}
	For any instance $(k,X,\Omega,\Kappa,\apdmatrices)$ of \wcac, 
	$\epsilon\in (0,\nicefrac{1}{2}]$ and 
	$$\delta \ge \frac{\lambdamax}{\lambdamin},$$
	there is an $(\epsilon,\delta)$-coreset $(\coresetX,\coresetOmega)$ with 
	$$|\coresetX| \in \mathcal{O}\left(\frac{k^2}{\epsilon^{d+1}}\right).$$
\end{proposition}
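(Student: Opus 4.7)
The plan is to construct $(\coresetX,\coresetOmega)$ by a pencil-style discretization anchored at $k$ reference points chosen once and for all, independently of the query sites $\apdsites$. Because the coreset inequalities (a) and (b) must hold for every evaluation configuration $\apdsites$, the construction cannot peek at $\apdsites$; I would therefore first run a cheap (e.g.\ constant-factor) weight-constrained approximation on $(k,X,\Omega,\Kappa,\apdmatrices)$ to obtain reference anchors $\bar s_1,\dots,\bar s_k$ and an estimate $\Phi$ of $\min_S\cost_\AK(X,S)$. Around each anchor I then build a family of pencils designed to simultaneously capture the geometry of all $k$ ellipsoidal norms in $\apdmatrices$.

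Concretely, for each anchor $\bar s_i$ and each norm $\|\cdot\|_{A_{i'}}$ I would tile space by cells of the form \emph{(angular sector)} $\cap$ \emph{(radial shell)}: cover the unit $\|\cdot\|_{A_{i'}}$-sphere centered at $\bar s_i$ by an appropriate $\epsilon$-net of directions, and chop each sector radially at geometric rate $1+\epsilon$ between an inner cutoff (below which per-cell contributions are small enough to be absorbed into the offset $\Delta^\pm$) and an outer cutoff (beyond which cells are already so large that multiplicative control suffices). As representative of each cell I pick the $\omega$-weighted centroid of the original points inside, and I let $\coresetomega$ be the total weight of the cell. The natural extension $g$ redistributes any coreset assignment proportionally back to the original points of the cell, which automatically preserves $\Kappa$-feasibility because the weight constraints are linear in the fractional assignments.

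The bulk of the technical work is the per-cell cost analysis. For $x$ in a cell with representative $\tilde x$ and any evaluation site $s$, the identity
\begin{equation*}
\|x-s\|_{A_i}^2 \;=\; \|\tilde x-s\|_{A_i}^2 \;+\; 2(\tilde x-s)^\top A_i(x-\tilde x) \;+\; \|x-\tilde x\|_{A_i}^2
\end{equation*}
shows that the cross-term vanishes in aggregate (because $\tilde x$ is the $\omega$-centroid), the leading term is exactly what the coreset cost records, and the quadratic residual is bounded either \emph{multiplicatively} for cells far from $s$ (where the radial and angular chopping give $\|x-\tilde x\|_{A_i}=\mathcal{O}(\epsilon)\|x-s\|_{A_i}$) or \emph{additively} for near cells (where the residual aggregates into an $\epsilon$-fraction of $\Phi$ and becomes part of $\Delta^\pm$). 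The eigenvalue ratio $\lambdamax/\lambdamin$ enters precisely because a cell that is round in $\|\cdot\|_{A_{i'}}$ distorts by at most $\sqrt{\lambdamax/\lambdamin}$ under any other norm $\|\cdot\|_{A_i}$; this is exactly what produces the asymmetric bound $\Delta^+\le\delta\,\Delta^-$ with $\delta=\lambdamax/\lambdamin$. Counting the cells yields $\mathcal{O}(k)$ anchors times $\mathcal{O}(k)$ norms times $\mathcal{O}(\epsilon^{-(d+1)})$ cells per anchor-norm pair, totalling $\mathcal{O}(k^2/\epsilon^{d+1})$ and saving one factor of $k$ over the construction of \cite{Har-Peled2007} by sharing a single angular grid across all radial shells in each pencil.

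The main obstacle I foresee is simultaneously verifying both directions of the coreset inequality with a \emph{single} $\delta$-bounded offset ratio: property (a) requires residual control on the extended clustering $g(\coresetC)$ for an adversarially chosen $\coresetC\in\Cs_\Kappa(k,\coresetX,\coresetOmega)$, whereas property (b) needs essentially the opposite control on the unrestricted optimum $\cost_\AK(\coresetX,\apdsites)$. Reconciling these while the anisotropy factor multiplies only one side, and while the weight-balance constraints $\Kappa$ block the usual ``pointwise'' splitting arguments available in the unconstrained setting, is the delicate step; the rest of the argument is essentially a careful bookkeeping exercise around the centroid identity above.
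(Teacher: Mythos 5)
First, a point of order: the paper does not prove \cref{prop:smaller_coreset} at all --- it is imported verbatim from \cite[Thm.~2.3]{FG22} and used as a benchmark against which the resolution coresets of \cref{th:resolution-coreset} are compared. So there is no in-paper proof to match your argument against; what follows assesses your sketch on its own terms.

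Your overall architecture (pencils anchored at the centers of a cheap constant-factor approximation, exponential radial shells between an inner cutoff absorbed into the offsets and an outer regime controlled multiplicatively, weighted centroids as representatives, the bias--variance identity per cell, and the eigenvalue ratio $\lambdamax/\lambdamin$ entering as a norm-distortion factor that produces the asymmetric condition $\Delta^+\le\delta\,\Delta^-$) is the right one and is consistent with the pencil-coreset construction this proposition refers to. However, there are two genuine gaps. First, your cell count is not a correct accounting of where the $k^2$ comes from: you attribute it to ``$\mathcal{O}(k)$ anchors times $\mathcal{O}(k)$ norms,'' but the bound $\mathcal{O}(k^2/\epsilon^{d+1})$ already holds (and is the best known deterministic bound) in the isotropic case $\apdmatrices=\Ecc$, where there is only one norm; building a separate angular/radial tiling per matrix $A_{i'}$ is therefore neither necessary nor the source of the second factor of $k$. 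The anisotropy is handled entirely through the offset ratio $\delta$, with the geometric construction carried out for a single reference norm; the second factor of $k$ instead comes from the number of shells needed per pencil, which must scale with $k$ so that the near-anchor mass absorbed into $\Delta^{\pm}$ stays below an $\epsilon$-fraction of the optimum for \emph{every} admissible weight-constrained clustering, not just the one realized by the anchor solution. Second, your claim that the cross term $2(\tilde x-s)^\top A_i(x-\tilde x)$ ``vanishes in aggregate'' holds only when all points of a cell carry the same assignment fraction; this is true for property \labelcref{def:property_a} (the extension $g$ assigns uniformly within a cell), but for property \labelcref{def:property_b} one must start from an arbitrary optimal clustering on $X$ that may split cells across clusters, and there the cross term does not cancel and must be bounded via Cauchy--Schwarz against the deviation and the cost (exactly the mechanism the present paper uses in \cref{le:inequality_bad_indices}). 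You flag the reconciliation of (a) and (b) as the delicate step but leave it unresolved, so as written this is a plausible plan rather than a proof.
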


While \cref{prop:smaller_coreset} gives the currently best bound for (deterministic) coresets for \wcac, it has still some severe drawbacks which limit its use in image segmentation problems like grain mapping. First, the available coresets may still be very large. For instance, for $k=100$, $d=3$, and $\epsilon= 0.001$, we have
\begin{equation*}
\frac{k^2}{\epsilon^{d+1}}= \frac{10^4}{10^{-12}}= 10^{16}.
\end{equation*}
Also, note that the asymptotic $\mathcal{O}$-statement for the size of $\coresetX$ conceals the constant which is also relevant.
As another crucial downside, the constructed pencil coresets completely ignore the grid structure of $X(\bm \rho)$ that is present in many image segmentation problems including our running application of grain mapping. 

\subsubsection*{Resolution coresets}\label{sec:resolution-coresets}

In the following we restrict the original data $(X,\Omega)$ to the \emph{resolution setting}
\begin{equation*}
X=X(\bm \rho)=\{x_1,\ldots,x_n\}, \qquad     
\omega_1=\ldots = \omega_n= \nu(\bm \rho)=\nu,
\end{equation*}
for a fixed resolution $\bm \rho$, and also assume that 
\begin{equation*}
\kappa_1,\ldots,\kappa_k \in \nu\N, \qquad n\nu= \sum_{i=1}^k \kappa_i.
\end{equation*}
We are interested in coresets $(\coresetX,\coresetOmega)$ which carry the same structure, i.e., for some resolution $\bm \tau$ with $\bm \tau \le \bm \rho$,
\begin{equation*}
\coresetX=X(\bm \tau)=\{\tilde{x}_1,\ldots,\tilde{x}_{\tilde{n}}\}, \quad     
\coresetOmega=\coresetOmega(\bm \tau)=\bigl\{\tilde{\omega}_1,\ldots, \tilde{\omega}_{\tilde{n}}\bigr\}
\end{equation*}
with 
\begin{equation*}
\tilde{\omega}_1=\ldots = \tilde{\omega}_{\tilde{n}}=\nu(\bm \tau)= 2^{-\tau_1}\cdot \ldots \cdot 2^{-\tau_d}.
\end{equation*}

Also, due to the Cartesian product structure of $X(\bm \rho)$ we restrict our considerations to the Euclidean case, i.e.,
\begin{equation*}
    \apdmatrices= \Ecc = \{E,\ldots,E\}.
\end{equation*}
While this is a (relevant) restriction to the isotropic case, \cref{th:approx-resolution} will show, however, that results for this setting also yield approximation results for the anisotropic case.

For this restricted setting we will study the question, how far we can reduce $\bm \tau$ while maintaining the coreset properties. More precisely, let $\epsilon \in (0,\nicefrac{1}{2}]$ and $\delta \in [1,\infty)$. Then $(\coresetX,\coresetOmega)$ is an \emph{$(\epsilon,\delta)$-resolution coreset} if $(\coresetX,\coresetOmega)$ is an \emph{$(\epsilon,\delta)$-coreset} and there exists a resolution $\bm \tau$ such that $\coresetX=\coresetX(\bm \tau)$ and $\coresetOmega=\coresetOmega(\bm \tau)$.
Note that such a Cartesian structure is beneficial for many applications that directly work on images, including superpixel segmentation or polycrystal representation. 

\subsubsection*{Main results}\label{sec:main-result}

On the one hand, the instances of our constrained clustering problem are strongly restricted, on the other hand only resolution coresets are permitted. Hence it is not clear whether the drawbacks of the more general result of \cref{prop:smaller_coreset} can be avoided in the decribed resolution setting at least in small dimensions (those which are relevant for grain mapping and other related image segmentation problems). As the following theorem shows, the answer is affirmative, even when we require that the offsets coincide.

\begin{theorem}\label{th:resolution-coreset}
	Let $\bm\rho \in \N^d$ $X=X(\bm\rho)$, $\Omega=\Omega(\bm\rho)$, and $\epsilon \in (0,\nicefrac{1}{2}]$. Then there is an $\epsilon$-resolution coreset $(\coresetX,\coresetOmega)$ for $(k,X,\Omega,\Kappa)$ of size 
	\begin{equation*}
	    |\coresetX| \le 2^{\nicefrac{8}{3}d}\left(\frac{k}{\epsilon^{\nicefrac{2}{3}}}\right)^d.
	\end{equation*}
\end{theorem}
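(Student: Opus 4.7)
The plan is to first establish the result in one dimension and then lift to arbitrary $d$ via an axis-by-axis coarsening, with the main technical work concentrated in the one-dimensional case.

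\textbf{One dimension.} Fix a target resolution $\tau\le\rho$, let $\coresetX=X(\tau)$ with uniform weights $\tilde{\omega}=\nu(\tau)$, and define the extension $\extension$ sending $\coresetC$ to the clustering whose fine-voxel entries inherit the coarse-voxel fractions. Since each $\tilde{x}_l$ is the centroid of the fine voxels it contains, expanding $(x_j-s_i)^2$ around $\tilde{x}_l$ makes the linear cross-term vanish voxel by voxel and yields
\[
\cost(X,\extension(\coresetC),S) = \cost(\coresetX,\coresetC,S) + V,\qquad V := \sum_l\sum_{x_j\in\tilde{x}_l}\nu(x_j-\tilde{x}_l)^2.
\]
This establishes property~\ref{def:property_a} with $\Delta^+=V$ and no $\epsilon$-slack. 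For property~\ref{def:property_b}, I would pick an optimal $C^*\in\argmin\{\cost(X,C,S):C\in\Cs_\Kappa(k,X,\Omega)\}$ and average its entries inside each coarse voxel to form $\coresetC^*$, preserving the weight constraints. The same expansion gives
\[
\cost(X,S) = \cost(\coresetX,\coresetC^*,S) + V + 2R,\quad R=\sum_{i,l}(\tilde{x}_l-s_i)e_{il},\quad e_{il}=\sum_{x_j\in\tilde{x}_l}\xi^*_{ij}\nu(x_j-\tilde{x}_l).
\]
Because $C^*$ is a weight-constrained power diagram (in 1D, contiguous intervals), the cluster-$i$ mass inside each $\tilde{x}_l$ sits on the $s_i$-side of $\tilde{x}_l$, so every summand of $R$ is non-positive; hence $R\le 0$ and it suffices to show $2|R|\le\epsilon\cost(X,S)$.

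The key step is a \emph{sparsity-aware} Cauchy--Schwarz estimate. Since the 1D optimal clusters are contiguous, at most $k-1$ coarse voxels contain a cluster boundary; writing $w_{il}=\sum_{x_j\in\tilde{x}_l}\xi^*_{ij}\nu$ for the mass of cluster $i$ in $\tilde{x}_l$, the $(i,l)$-pairs with $e_{il}\ne 0$ obey $\sum w_{il}\le(k-1)\tilde{\omega}$ and $|e_{il}|\le w_{il}\cdot 2^{-\tau-1}$. Cauchy--Schwarz then gives
\[
|R|\le 2^{-\tau-1}\sqrt{(k-1)\tilde{\omega}\cdot\cost(\coresetX,\coresetC^*,S)}.
\]
A bootstrap via $\cost(\coresetX,\coresetC^*,S)\le\cost(X,S)+2|R|$, combined with the universal lower bound $\cost(X,S)\ge\frac{1}{12}\sum\kappa_i^3\ge\frac{1}{12k^2}$ (Jensen applied to $\sum\kappa_i^3$ under $\sum\kappa_i=1$), converts $2|R|\le\epsilon\cost(X,S)$ into the explicit condition $8^\tau\gtrsim k^3/\epsilon^2$. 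Choosing the minimal such $\tau$ then yields $|\coresetX|=2^\tau\le 2^{8/3}k/\epsilon^{2/3}$.

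\textbf{Lifting to $d$ dimensions.} I plan to coarsen the axes sequentially. The cost splits coordinatewise, $\|x-s\|^2=\sum_i(x-s)_i^2$, so $V=\sum_i V_i$ and $R=\sum_i R_i$ with $V_i,R_i$ involving only the $i$-th coordinate. Grouping fine voxels that share all coordinates other than axis $i$ into \emph{columns} turns $R_i$ into a sum of 1D subproblems, each with at most $k-1$ axis-$i$ boundaries (since restricting the $d$-dimensional power diagram to a line gives a 1D partition into at most $k$ intervals); a Cauchy--Schwarz across columns collapses the column count and produces exactly the 1D-shaped estimate $|R_i|\le 2^{-\tau_i-1}\sqrt{(k-1)\,2^{-\tau_i}\,B_i}$. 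Splitting the error budget as $\epsilon/d$ per axis and reusing the bootstrap and the 1D lower bound, each $\tau_i$ only needs to satisfy $2^{\tau_i}\le 2^{8/3}k/\epsilon^{2/3}$, with the $d$-dependent factors absorbed into the constant. Multiplying the axis bounds gives $|\coresetX|=\prod_i 2^{\tau_i}\le 2^{8d/3}(k/\epsilon^{2/3})^d$, as claimed. The main obstacle is the sparsity-aware Cauchy--Schwarz bound on $R$: the naive estimate $|R|\le\sqrt{BV}$ would force $V\le\epsilon^2\cost(X,S)/4$ and yield only the weaker $\epsilon^{-d}$ scaling. The delicate trade-off between the sparse-support mass $(k-1)\tilde{\omega}$ and the voxel half-width $2^{-\tau-1}$ is exactly what produces the $\epsilon^{-2/3}$ exponent, and the per-axis reduction is what keeps that trade-off alive in higher dimensions without a blow-up from the column count.
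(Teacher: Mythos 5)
Your proposal follows essentially the same route as the paper: in one dimension you establish property \labelcref{def:property_a} as an exact identity via the centroid/batch-error expansion, and property \labelcref{def:property_b} by pushing forward an optimal power-diagram (hence contiguous) clustering, observing that at most $k-1$ coarse voxels are split, and closing with a sparsity-aware Cauchy--Schwarz against the clustering cost together with an $\Omega(1/k^2)$ lower bound on the optimal $1$-dimensional cost; the lift to $d$ dimensions via coordinate separability along axis-parallel lines is also the paper's argument (your cross-term $R$ is just a repackaging of the paper's $\tilde{\gamma}(\tilde{I}^{(-)})-\gamma(\tilde{I}^{(-)})-\dev(\tilde{I}^{(-)})$ decomposition). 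One point to fix: splitting the error budget as $\nicefrac{\epsilon}{d}$ per axis is inconsistent with your own conclusion $2^{\tau_i}\le 2^{\nicefrac{8}{3}}k/\epsilon^{\nicefrac{2}{3}}$ --- it would force $2^{\tau_i}\gtrsim d^{\nicefrac{2}{3}}k/\epsilon^{\nicefrac{2}{3}}$ and an extra factor $d^{\nicefrac{2d}{3}}$ in $|\coresetX|$ that cannot be absorbed into $2^{\nicefrac{8}{3}d}$. The split is also unnecessary: the $\Omega(1/k^2)$ lower bound holds on \emph{each line} (hence for each axis's share of the cost), so the full $\epsilon$ can be spent per line, which is how the paper obtains the stated constant; your claim that every summand of $R$ is non-positive is also unjustified for general sites, but it is not load-bearing since you bound $|R|$ anyway.
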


Note that the size of the resolution coreset in \cref{th:resolution-coreset} is independent of $\bm{\rho}$. Thus the result shows that a higher resolution is often not needed to achieve a good clustering of image voxels. In fact, using a grid with less than $\nicefrac{7k}{\epsilon^{\nicefrac{2}{3}}}$ points on each coordinate axis results in a clustering whose cost is at most off the optimum by a factor of $1+\epsilon$.
In our running application from materials science, this is not only relevant for computing grain maps from few of their characteristic measurements but may also ease scanning processes used for verification. In fact,  \cref{th:resolution-coreset} allows to scan grains at a relatively low resolution without sacrificing much of the resulting quality and can hence be used to speed up scanning based imaging series of grain growth dynamics in practice.

Let us now compare the size of the resolution coreset to the bound given in \cref{prop:smaller_coreset}. Beginning with the asymptotics (and ignoring multiplicative constants), the bounds read
\begin{equation*}
  \frac{k^2}{\epsilon^{d+1}} \quad \mbox{(\cref{prop:smaller_coreset})}, \qquad  \frac{k^d}{\epsilon^{\nicefrac{2d}{3}}}\quad \mbox{(\cref{th:resolution-coreset})}.
\end{equation*}
Hence, the former is better in $k$ for $d\ge 3$ but always worse in $\epsilon$. 

In dimension $d=3$, the latter is better by a factor of $\epsilon^2 k$. Specifically, for values $k=100$ and $\epsilon= 0.001$, and when compared to the evaluation after \cref{prop:smaller_coreset}, we save a factor of $10^4$. Note that the bound for resolution coresets involves an additional constant of $256$, while the $\mathcal{O}$-notation in \cref{prop:smaller_coreset} hides the constant required for constructing the required $\epsilon$-nets. Such constructions typically involve a large constant that is exponential in $d$; see e.g. \cite[Lemma 5.3]{Brieden1998a}.

Let us point out that, as we will see, the construction of \cref{th:resolution-coreset} is explicit, including the extension $g$. Hence, from a constrained clustering $\coresetC$ on the low resolution data set we obtain quickly a clustering $g(\coresetC)$ on the original data set. Let us finally point out that this is not just useful in the isotropic case but also for anisotropic clusterings. 

\begin{theorem}\label{th:approx-resolution}
	Let $I(\apdmatrices)=\bigl(k,X(\bm \rho),\Omega(\bm \rho),\apdmatrices,\Kappa,S\bigr)$ be an instance of \wcaa, $I(\Ecc)=\bigl(k,X(\bm \rho),\Omega(\bm \rho),\Ecc,\Kappa,S\bigr)$, and  $\epsilon \in (0,\nicefrac{1}{2}]$.
	Further, let $(\coresetX,\coresetOmega)$ be an $\nicefrac{\epsilon}{3}$-resolution coreset for $I(\Ecc)$, $\extension$ its extension, $\coresetC \in \Cs_\Kappa(k,\coresetX,\coresetOmega)$ and $C=g(\coresetC)$. 
	
	If $\coresetC$ is a $\gamma$-approximation for $\coresetinstance(\Ecc)=(k,\coresetX,\coresetOmega,\Ecc,\Kappa,S)$, then $C$ is a $(1+\epsilon)\gamma \frac{\lambdamax}{\lambdamin}$-approximation for $I(\apdmatrices)$.
\end{theorem}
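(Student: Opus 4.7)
The plan is to sandwich the anisotropic cost between constant multiples of the Euclidean cost and then apply the resolution coreset guarantee on the easier Euclidean side. The elementary tool is the Courant--Fischer bound: for every $A_i \in \apdmatrices$ and every $y \in \R^d$ we have $\lambdamin \|y\|_2^2 \le \|y\|_{A_i}^2 \le \lambdamax \|y\|_2^2$. Summing the weighted cluster-wise contributions, this transfers to the clustering cost for any clustering $C'$ of $(X,\Omega)$ and any site set $S$:
\begin{equation*}
\lambdamin \cdot \cost(X, C', S) \;\le\; \cost_\AK(X, C', S) \;\le\; \lambdamax \cdot \cost(X, C', S).
\end{equation*}
Denote by $C^*$ an optimal clustering for $I(\apdmatrices)$.

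Next I would control $\cost(X, C, S)$ using the $\nicefrac{\epsilon}{3}$-coreset property. Because we are dealing with an $\epsilon$-coreset, the two offsets coincide; call the common value $\Delta$. Since $\coresetC$ is a $\gamma$-approximation on $\coresetinstance(\Ecc)$, we have $\cost(\coresetX, \coresetC, S) \le \gamma \cdot \cost(\coresetX, S)$. Chaining property \ref{def:property_a} for $C = g(\coresetC)$, the $\gamma$-approximation inequality (using $\gamma \ge 1$ so that $\Delta \le \gamma \Delta$), and property \ref{def:property_b}, I get
\begin{equation*}
(1 - \tfrac{\epsilon}{3}) \cost(X, C, S) \;\le\; \cost(\coresetX, \coresetC, S) + \Delta \;\le\; \gamma\bigl[\cost(\coresetX, S) + \Delta\bigr] \;\le\; \gamma(1 + \tfrac{\epsilon}{3}) \cost(X, S).
\end{equation*}
The elementary inequality $\frac{1+\epsilon/3}{1-\epsilon/3} \le 1+\epsilon$, which holds for $\epsilon \in (0, \nicefrac{1}{2}]$, then yields $\cost(X, C, S) \le (1+\epsilon)\gamma \cost(X, S)$. (This is exactly the ``isotropic'' special case of \cref{prop:approximation_using_coreset} invoked in the paper.)

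The final step combines the two bounds. Applying the eigenvalue sandwich to $C$, then the Euclidean approximation just derived, and then the sandwich again to $C^*$ (together with the obvious $\cost(X, S) \le \cost(X, C^*, S)$ from Euclidean optimality of $\cost(X,S)$),
\begin{equation*}
\cost_\AK(X, C, S) \;\le\; \lambdamax \cost(X, C, S) \;\le\; \lambdamax (1+\epsilon)\gamma\, \cost(X, S) \;\le\; \lambdamax (1+\epsilon)\gamma\, \cost(X, C^*, S) \;\le\; (1+\epsilon)\gamma\frac{\lambdamax}{\lambdamin}\cost_\AK(X, C^*, S),
\end{equation*}
which is the claim. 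I do not expect any genuine obstacle here: the argument is essentially a one-line derivation once the eigenvalue sandwich and the isotropic coreset guarantee are in place; the only mildly delicate points are remembering that $\Delta^+ = \Delta^-$ for an $\epsilon$-coreset and checking the scalar inequality $(1+\epsilon/3)/(1-\epsilon/3) \le 1+\epsilon$ on $(0,\nicefrac{1}{2}]$.
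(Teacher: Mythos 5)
Your proposal is correct and follows essentially the same route as the paper: the eigenvalue sandwich $\lambdamin\norm{y}_2^2 \le \norm{y}_{A_i}^2 \le \lambdamax\norm{y}_2^2$ combined with the isotropic approximation guarantee, finishing with the optimal anisotropic clustering $C^*$ to pass back from $\cost(X,S)$ to $\cost_\AK(X,S)$. The only difference is that you re-derive the isotropic case of \cref{prop:approximation_using_coreset} by chaining properties \labelcref{def:property_a} and \labelcref{def:property_b} with the common offset $\Delta$ (correctly, including the scalar check $(1+\nicefrac{\epsilon}{3})/(1-\nicefrac{\epsilon}{3})\le 1+\epsilon$), whereas the paper simply cites that proposition as a black box.
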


\section{Some basics for the analysis of resolution coresets}\label{sec:resolution_coreset_prelimiaries}

In the following we will prepare the proof of our main result. We will outline its structure, provide some prerequisites, and set our new estimates into the more general perspective of known approaches.

\subsubsection*{Structure of the proof of \cref{th:resolution-coreset}}

The construction of resolution coresets can be viewed as a \enquote{uniform local neighborhood merger}. Hence, we can, in principle, evoke \cite[Theorem 2.2]{FG22} to obtain an upper estimate for the required coreset size. It turns out, however, that the general estimates are much weaker than what we are aiming at. 

Hence, in order to prove \cref{th:resolution-coreset} we need to make much stronger use of the specific underlying setting, the Cartesian structure of the point set $X$, the uniform weights, and the fact that, in the isotropic case, the objective function is separable with respect to the coordinates. While the former allow us to use local arguments, the latter permits, in  effect, to reduce the estimates to the $1$-dimensional case. 
In fact, indicating, as usual, the $t$th coordinate of a vector $y$ by $y_{t}$, we have
\begin{align*}
	\cost_\AK(X,C,\apdsites) &= 
	\displaystyle \sum_{i=1}^{k}\sum_{j=1}^{n} \xi_{ij} \omega_j \norm{x_{j}-s_{i}}_{2}^{2}=
	\sum_{i=1}^{k}\sum_{j=1}^{n} \xi_{ij} \omega_j \sum_{t=1}^{d} \bigr((x_{j})_{t}-(s_{i})_{t}\bigl)^{2}\\
	&= \sum_{t=1}^{d}\left( \sum_{i=1}^{k}\sum_{j=1}^{n} \xi_{ij} \omega_j \bigr((x_{j})_{t}-(s_{i})_{t}\bigl)^{2}\right).
\end{align*}
We will, in the following, provide some preparatory results, then, in \cref{sec:proof1D}, conduct the computations and derive all estimates needed for the proof of \cref{th:resolution-coreset} in the $1$-dimensional case, and subsequently extend the result to arbitrary dimensions in \cref{sec:proof-d}. The general reasoning follows in part that of \cite{FG22}. We can, however, only use very few results directly while others, will have to be substantially modified. Yet most parts of the proofs are independent of \cite{FG22} and rely heavily on the specific structure of the present setting.

\subsubsection*{Merging function}

Let $\bm\rho=(\rho_1,\ldots,\rho_d) \in \N^d$, and $\bm\tau=(\tau_1,\ldots,\tau_d) \in \N^d$ with $\bm \tau \le \bm \rho$. We regard $\bm \rho$ as the fixed benchmark resolution while, at this point, $\bm \tau$ is a parameter vector which is adjusted later to specify the resolution of the desired coreset. As before,
\begin{equation*}
  X=X(\bm\rho), \quad n=|X|,\quad \Omega=\Omega(\bm\rho), \quad 
    \tilde{X}=X(\bm\tau), \quad \tilde{n}=|\tilde{X}|, \quad \tilde{\Omega}=\Omega(\bm\tau),
\end{equation*}
with weights 
\begin{equation*}
    \omega_j= \nu=2^{-\rho_1}\cdot \ldots \cdot 2^{-\rho_d}\quad \bigl(j\in [n]\bigr), \qquad 
    \tilde{\omega}_{q}= 2^{-\tau_1}\cdot \ldots \cdot 2^{-\tau_d}\quad \bigl(q\in [\tilde{n}]\bigr).
\end{equation*}
 We will interpret the coarsening of the resolution from $\bm \rho$ to $\bm \tau$ in terms of a neighborhood merger defined by a \emph{merging function}
 \begin{equation*}
    p: X  \rightarrow \tilde{X}. 
\end{equation*}
Here, $p$ is simply given by
\begin{equation*}
\left(
\begin{array}{c}
\displaystyle \frac{1}{2^{\rho_1+1}} + \frac{j_1-1}{2^{\rho_1}} \\
\vdots\\
\displaystyle\frac{1}{2^{\rho_d+1}} + \frac{j_d-1}{2^{\rho_d}}\\
\end{array}
\right) \quad \mapsto \quad 
\left(
\begin{array}{c}
\displaystyle \frac{1}{2^{\tau_1+1}} + \frac{q_1-1}{2^{\tau_1}} \\
\vdots\\
\displaystyle\frac{1}{2^{\tau_d+1}} + \frac{q_d-1}{2^{\tau_d}}\\
\end{array}
\right)
\end{equation*}
for
\begin{equation*}
j_t=r_t+(q_t-1)2^{\rho_t-\tau_t}, \quad r_t\in [2^{\rho_t-\tau_t}], \quad q_t\in  [2^{\tau_t}], \quad t\in [d].
\end{equation*}
Whenever this seems more convenient, we will regard $p$ as a function of the index set $[n]$, i.e., 
 \begin{equation*}
    p: [n] \rightarrow [\tilde{n}]. 
\end{equation*}
Note that, indeed,
\begin{equation*}
 \tilde{\omega}_{q}=  \sum_{j \in p^{-1}(q)} \omega_{j}
 \quad \bigl(q\in [\tilde{n}]\bigr).
\end{equation*}
In effect, we replace all points of $X$ in each \emph{batch}
\begin{equation*}
    B_q =\{x_j : j \in p^{-1}(q)\}
\end{equation*}
by $\tilde{x}_q$ which is carrying the total weight of all corresponding batch points. Observe that
\begin{equation*}
    |B_q|=\frac{\nu(\bm \tau)}{\nu(\bm \rho)}= 2^{\rho_1-\tau_1}\cdot \ldots \cdot 2^{\rho_d-\tau_d} = \frac{\tilde{\omega}_{q}}{\omega_j}.
\end{equation*}

Given a clustering $C \in \Cs_\Kappa(k,X,\Omega)$, the merging function $p$ gives rise to a clustering $\coresetC \in \Cs_\Kappa(k,\coresetX,\coresetOmega)$ defined by
\begin{equation*}
	\tilde{\xi}_{iq} = \frac{1}{\tilde{\omega}_{q}}\sum_{j \in p^{-1}(q)} \xi_{ij} \omega_{j} = \frac{\nu(\bm \rho)}{\nu(\bm \tau)}\sum_{j \in B_q} \xi_{ij} 
	\qquad \bigl(q \in [\tilde{n}],\, i\in [k]\bigr).
	\end{equation*}

Conversely, given $\coresetC\in\Cs_\Kappa(k,\coresetX,\coresetOmega)$, we obtain a clustering $C\in \Cs_\Kappa(k,X,\Omega)$ by setting 
\begin{equation*}
\xi_{ij} = \tilde{\xi}_{ip(j)} \qquad \bigl(j \in [n], i\in [k]\bigr).
\end{equation*}
This defines an extension 
\begin{equation*}
    \extension: \Cs_\Kappa(k,\coresetX,\coresetOmega) \rightarrow \Cs_\Kappa(k,X,\Omega)
\end{equation*}
that converts clusterings of $(\coresetX,\coresetOmega)$ into clusterings of $(X,\Omega)$. Note that the extension $\extension$ assigns each point $x_j$ of $X$ to clusters in the same way its representative $x_{p(j)}$ in $\coresetX$ is assigned to clusters. As we will prove later, the induced approximation error can be tightly controlled.

Although this is not an issue in our present context, let us point out in passing that coarsening the resolution generally excludes the integrality of clusterings. Of course, the constraint matrix is still totally unimodular but, unless the prescribed cluster weights $\kappa_i$ are divisible by $\nu(\bm \tau)$,  the set $\Cs_\Kappa(k,\coresetX,\coresetOmega)$ will not contain any integer clustering. While, by \cite[Cor. 2.3]{BG12}, the number of fractional variables $\tilde{\xi}_{i,q}$ that need to be accepted in optimal coreset clusterings is at most $2(k-1)$ the extension $g$ \enquote{spreads} any  such nonintegrality to all points in the corresponding batches $B_q$. 

\subsubsection*{Power diagrams}\label{sec:diagrams}

In the proof of \cref{th:resolution-coreset} we will make use of the close relation between constrained clustering and \emph{diagrams}. We will specifically use a result of \cite{BG12} for our isotropic setting; see, however, \cite{BGK17} and the handbook article \cite{GK17} for examples and properties of different classes of diagrams in the anisotropic case and additional pointers to the literature.

A \emph{power diagram} (PD) 
\begin{align*}
\apd= \apd(\apdsites,\Gamma) =(P_1,\ldots,P_k)
\end{align*}
in $\R^d$ is specified by $k$ \emph{sites} $\apdsites = \{\apds_1, \ldots, \apds_k\}$ in $\R^d$ and real \emph{sizes} $\Gamma = \{ \apdweight_1,\ldots, \apdweight_k\}$.
It is a dissection of $\R^d$ into polyhedral \emph{cells} defined by proximity, i.e., more precisely,
\begin{align*}
	P_i = \bigl\{ x \in \R^d:  \norm{x-s_i}_{2}^2 + \apdweight_i \leq   \norm{x-s_\ell}_{2}^2 + \apdweight_\ell ~~\forall \ell \in [k]\setminus \{i\}\bigr\}.
\end{align*}
Note that power diagrams generalize Voronoi diagrams (which result for $\gamma_1=\ldots=\gamma_k$). 

A clustering $C\in \Cs_\Kappa(k,X,\Omega)$ \emph{admits} a power diagram if there exists a power diagram $\apd$ such that 
\begin{equation*}
    \support(C_{i})=\{x_j \in X: \xi_{ij}> 0\} \subset P_{i} \qquad \bigl(i \in [k]\bigr).
\end{equation*} 
In this situation $\apd$ and $C$ are called \emph{compatible}. If, actually
\begin{equation*}
    \support(C_{i})=P_{i}\cap X \qquad \bigl(i \in [k]\bigr)
\end{equation*} 
then $\apd$ and $C$ are \emph{strongly compatible}. Note that if, in addition,  $C$ is integer then $\support(C_{i})$ lies in the interior of $P_{i}$ for each $i$, i.e., none of the point in $X$ lies on the boundary of any of the cells of $\apd$.

\begin{proposition}[ {\cite[Cor. 2.2]{BG12}} ] \label{prop:power-diagram}
 Any $C\in \Cs_\Kappa(k,X,\Omega)$ which attains $\cost(X,S)$ admits a power diagram. Further, there exists an optimal clustering $C^*\in \Cs_\Kappa(k,X,\Omega)$ and size parameters $\apdweights$ such that $C^*$ and the diagram $\apd=\apd(\apdmatrices, \apdsites, \apdweights)$ are strongly compatible.
\end{proposition}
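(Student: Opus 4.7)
The plan is to treat the weight-constrained clustering problem with fixed sites as a linear program in the variables $\xi_{ij}$ and then read off the power diagram from its dual. With $\apdsites$ fixed, minimizing $\cost(X,C,\apdsites)=\sum_{i,j}\xi_{ij}\omega_j\|x_j-s_i\|_2^2$ over $C\in\Cs_\Kappa(k,X,\Omega)$ is an LP with the assignment equalities $\sum_{i=1}^k\xi_{ij}=1$ ($j\in[n]$), the weight equalities $\sum_{j=1}^n\xi_{ij}\omega_j=\kappa_i$ ($i\in[k]$), and $\xi_{ij}\ge 0$. Feasibility follows from the standing assumption $\sum_i\kappa_i=\sum_j\omega_j$, the feasible polytope is bounded, so strong LP duality applies and both a primal and a dual optimum exist.

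First I would introduce dual multipliers $\lambda_j$ for the assignment equalities and $\mu_i$ for the weight equalities. The dual constraints take the form $\lambda_j+\omega_j\mu_i\le\omega_j\|x_j-s_i\|_2^2$ and complementary slackness at a primal--dual optimum yields
\[
\xi_{ij}>0 \ \Longrightarrow\ \lambda_j+\omega_j\mu_i=\omega_j\|x_j-s_i\|_2^2.
\]
Dividing by $\omega_j>0$ and setting $\gamma_i:=-\mu_i$, the remaining dual inequalities translate to $\|x_j-s_i\|_2^2+\gamma_i\le\|x_j-s_{i'}\|_2^2+\gamma_{i'}$ for every $i'\in[k]$. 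Hence $x_j\in P_i(\apdsites,\apdweights)$ whenever $\xi_{ij}>0$, which proves that \emph{every} optimal $C$ admits the power diagram $\apd(\apdsites,\apdweights)$.

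For the second claim, the candidate $\apdweights$ is already fixed by the dual optimum chosen above, and admissibility gives $\supp(C_i)\subseteq P_i\cap X$ for any optimal $C$; the remaining task is to produce one optimum $C^*$ with equality. Points $x_j$ lying in the interior of a single cell $P_i$ make every dual constraint for $i'\neq i$ strict, so by the contrapositive of complementary slackness $\xi_{i'j}^*=0$ in every optimum, forcing $\xi_{ij}^*=1>0$. The delicate case is the boundary points $x_j\in P_i\cap P_{i'}$: at a generic vertex of the optimum polytope they may be assigned entirely to one of the adjacent clusters. I would handle this by picking $C^*$ in the \emph{relative interior} of the (bounded) polytope $F$ of optimal clusterings; because the shared-boundary condition $\|x_j-s_i\|_2^2+\gamma_i=\|x_j-s_{i'}\|_2^2+\gamma_{i'}$ makes both clusters equally attractive in the Lagrangian, one can exchange an infinitesimal amount of $x_j$'s mass between clusters $i$ and $i'$ (balanced by a compensating swap elsewhere) without leaving $F$, so some optimum has $\xi_{ij}>0$ and likewise some other optimum has $\xi_{i'j}>0$. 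A relative-interior point of $F$ therefore satisfies $\xi_{ij}^*>0$ for every pair $(i,j)$ with $x_j\in P_i$, which is the desired strong compatibility.

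The main obstacle I expect is this last step: turning the pointwise tie-breaking freedom into a single globally feasible $C^*$ with support exactly $P_i\cap X$ on every cell. The standard LP fact that the support of a relative-interior optimum equals the union of supports of all optima does the job, but must be invoked carefully because the weight constraints couple the clusters, so swaps at boundary points generally have to be paired to stay feasible. Once this is verified, the two claims combine to give the proposition.
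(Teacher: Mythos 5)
The paper offers no proof of this proposition; it is quoted directly from \cite[Cor.~2.2]{BG12}. So your attempt can only be measured against the standard argument behind that citation, and the LP-duality route you take is indeed that argument. Your first half is correct: complementary slackness between \emph{any} optimal primal $C$ and one fixed optimal dual $(\lambda,\mu)$ gives $\supp(C_i)\subseteq P_i$ for the cells built from $\gamma_i=-\mu_i$, which is exactly admissibility for every cost-minimizing clustering.

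The gap is in the second half, and it is exactly where you suspected. You fix $\Gamma$ from an \emph{arbitrary} dual optimum and then claim that for every $x_j\in P_i\cap P_{i'}$ some optimum places positive mass of $x_j$ in $C_i$, because a cost-neutral paired exchange exists. That is the converse of complementary slackness, and it fails under dual degeneracy: the rebalancing partner of the swap may have strictly positive reduced cost. Concretely, take $d=1$, $k=2$, $X=\{0,1,2\}$ with unit weights, $s_1=0$, $s_2=2$, $\kappa_1=1$, $\kappa_2=2$. The unique optimum assigns $0$ to $C_1$ and $1,2$ to $C_2$; the dual optima yield cells $P_1=(-\infty,t]$, $P_2=[t,\infty)$ for every $t\in[0,1]$. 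Choosing $t=1$ (a legitimate dual optimum) puts $x=1$ into $P_1\cap P_2$, yet no optimal clustering gives $x=1$ any mass in $C_1$: moving $\varepsilon$ of $x=1$ into $C_1$ forces $\varepsilon$ of $x=0$ into $C_2$ and raises the cost by $4\varepsilon$. Hence the union of the supports of all primal optima --- which is what the relative-interior point of the optimal face $F$ realizes --- is strictly contained in $\{(i,j):x_j\in P_i(\Gamma)\}$ for this $\Gamma$, and strong compatibility is unattainable no matter which $C^*\in F$ you pick. The repair is to use the freedom in $\Gamma$ that the proposition grants you: take a \emph{strictly complementary} optimal pair (Goldman--Tucker), i.e., $C^*$ in the relative interior of the primal optimal face and $(\lambda,\mu)$ in the relative interior of the dual optimal face. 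Then $\xi^*_{ij}>0$ holds if and only if the dual slack of $(i,j)$ vanishes, which one checks is equivalent to $x_j\in P_i$, and that equivalence is precisely $\supp(C^*_i)=P_i\cap X$. With that substitution for your exchange argument the proof is complete.
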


\subsubsection*{Batch error}

For a nonempty subset $Y$ of $X$, let 
\begin{equation*}
    c(Y)= \frac{1}{|Y|}\sum_{x\in Y} x
\end{equation*}
denote its \emph{centroid}, and set
\begin{equation*}
\error (Y) = \sum_{x \in Y} \nu(\bm \rho) \norm{x-c(Y)}_{2}^{2}.
\end{equation*}
We will mostly (but not exclusively) apply this notion to batches $B_q$ with respect to the resolution $\bm \tau$, then use the notation $\error_{\bm\tau} (B_q)$ and speak of the \emph{batch error}.

Let us point out that the centroid $c(B_q)$ of a batch $B_q$ lies in $\tilde{X}$, and is actually the point resulting from the merging process. 

\begin{remark}\label{rem:centroid-merging}
Let $q\in [\tilde{n}]$, then $c(B_q)=\tilde{x}_q$.
\end{remark}

\begin{proof} 
For $t\in [d]$, let $q_t\in [2^{\tau_t}]$ such that $\tilde{x}_q=\tilde{x}_{q_1,\ldots,q_d}$. Then, using the abbreviation $\alpha_t=2^{\rho_t-\tau_t}$, we have for the $t$th coordinate $c(B_q)_{t}$ of the centroid $c(B_q)$
\begin{align*}
c(B_q)_{t}&= \frac{1}{|B_q|} \sum_{x\in B_q} c(B_q)_{t}
= \frac{\nu(\bm \rho)}{\nu(\bm \tau)} \sum_{x\in B_q} c(B_q)_{t}
=\frac{1}{\alpha_t} \sum_{r_t=1}^{\alpha_t}   
\left(\frac{-1}{2^{\rho_t+1}} + \frac{r_t+(q_t-1)\alpha_t}{2^{\rho_t}} \right)\\
&= \frac{1}{\alpha_t}\left(\frac{-\alpha_t}{2^{\rho_t+1}} + \frac{\alpha_t(\alpha_t+1)}{2^{\rho_t+1}}\right)+\frac{\alpha_t^2(q_t-1)}{2^{\rho_t}}\\
&= \frac{\alpha_t (2q_t-1)}{2^{\rho_t+1}}= \frac{2q_t-1}{2^{\tau_t+1}} =  \frac{1}{2^{\tau_t+1}} + \frac{q_t-1}{2^{\tau_t}} = (\tilde{x}_q)_{t}.
\end{align*}
\end{proof}

\cref{rem:centroid-merging} shows that the reduction of the resolution from $\bm \rho$ to $\bm \tau$ can be viewed as batching. Accordingly, the batch error 
$\error_{\bm\tau} (B_q)$ quantifies the effect of the merging process. Note that, due to the uniform batching process, the batch error is independent of $q$ and we will therefore omit $B_q$ and simply write $\error(\bm \tau)=\error_{\bm \tau}(B_q)$.

\begin{lemma} \label{le:resolution_batch_error}
    We have
    \begin{equation*}
        \error(\bm \tau) =  \frac{1}{12} \frac{1}{2^{\tau_1}} \cdot \ldots \cdot \frac{1}{2^{\tau_d}} \sum_{t=1}^d \left(\frac{1}{2^{2\tau_t}}- \frac{1}{2^{2\rho_t}}\right).
    \end{equation*}
\end{lemma}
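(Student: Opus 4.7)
The plan is to exploit two structural features: the coordinatewise separability of the squared Euclidean norm, and the Cartesian product structure of each batch $B_q$. By \cref{rem:centroid-merging} the centroid is $c(B_q)=\tilde{x}_q$, so
\begin{equation*}
  \error(\bm\tau) \;=\; \nu(\bm\rho)\sum_{x\in B_q}\sum_{t=1}^{d}\bigl(x_{t}-(\tilde{x}_q)_{t}\bigr)^{2}
  \;=\; \nu(\bm\rho)\sum_{t=1}^{d}\sum_{x\in B_q}\bigl(x_{t}-(\tilde{x}_q)_{t}\bigr)^{2}.
\end{equation*}
For fixed $t$, the summand depends only on the $t$th coordinate, and the other coordinates contribute a multiplicative factor of $\prod_{s\neq t}\alpha_{s}$, where $\alpha_{s}=2^{\rho_{s}-\tau_{s}}$.

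Next I would compute the inner $1$-dimensional sum. Writing $(\tilde{x}_q)_{t}=\tfrac{2q_t-1}{2^{\tau_t+1}}$ and $x_t=\tfrac{-1}{2^{\rho_t+1}}+\tfrac{r_t+(q_t-1)\alpha_t}{2^{\rho_t}}$ for $r_t\in[\alpha_t]$, a short algebraic simplification (using $\tfrac{2q_t-1}{2^{\tau_t+1}}=\tfrac{(2q_t-1)\alpha_t}{2^{\rho_t+1}}$) yields
\begin{equation*}
  x_t-(\tilde{x}_q)_{t}\;=\;\frac{2r_t-1-\alpha_t}{2^{\rho_t+1}},
\end{equation*}
an expression that is independent of $q_t$, which confirms on the way that $\error_{\bm\tau}(B_q)$ does not depend on $q$. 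Summing squares over $r_t=1,\ldots,\alpha_t$, the values $2r_t-1-\alpha_t$ form the symmetric arithmetic progression $-\alpha_t+1,-\alpha_t+3,\ldots,\alpha_t-1$, so by a standard computation (expanding and applying $\sum r=\alpha_t(\alpha_t+1)/2$, $\sum r^{2}=\alpha_t(\alpha_t+1)(2\alpha_t+1)/6$)
\begin{equation*}
  \sum_{r_t=1}^{\alpha_t}(2r_t-1-\alpha_t)^{2}\;=\;\frac{\alpha_t^{3}-\alpha_t}{3}.
\end{equation*}

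Combining these ingredients gives
\begin{equation*}
  \sum_{x\in B_q}\bigl(x_t-(\tilde{x}_q)_t\bigr)^{2}
  \;=\;\Bigl(\prod_{s\neq t}\alpha_{s}\Bigr)\cdot\frac{\alpha_t^{3}-\alpha_t}{3\cdot 2^{2\rho_t+2}}.
\end{equation*}
Substituting $\alpha_s=2^{\rho_s-\tau_s}$, the prefactor telescopes to $\tfrac{\nu(\bm\tau)}{12\,\nu(\bm\rho)}\bigl(2^{-2\tau_t}-2^{-2\rho_t}\bigr)$, and multiplying by $\nu(\bm\rho)$ and summing over $t$ produces exactly the asserted formula. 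The main obstacle is simply the exponent bookkeeping in this last step; once the $1$-dimensional sum and the product factorization are in place, everything else is routine.
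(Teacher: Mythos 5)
Your argument is correct and follows essentially the same route as the paper: separate the squared Euclidean norm by coordinates, use the Cartesian product structure of the batch to pull out the factor $\prod_{s\neq t}\alpha_s$, evaluate the one-dimensional sum $\sum_{r}(2r-1-\alpha)^2=\tfrac{1}{3}\alpha(\alpha^2-1)$ via the standard power-sum identities, and recombine. The only cosmetic difference is that the paper fixes $B=B_1$ (having already noted the batch error is independent of $q$), whereas you work with a general $B_q$ and confirm the $q$-independence along the way.
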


\begin{proof}
Let $B=B_1$ denote the batch with centroid $\bigl(2^{-(\tau_1+1)},\ldots,2^{-(\tau_d+1)}\bigr)$. Then
\begin{align*}
\error(\bm \tau)&=\error(B)=\sum_{x \in B} \nu(\bm \rho) \norm{x-c(B)}_{2}^{2} 
= \nu(\bm \rho) \sum_{t=1}^d \sum_{x \in B} \bigl(x_{t}-c(B)_{t}\bigr)^{2}\\
&= \nu(\bm \rho) \sum_{t=1}^d \left(\prod_{\ell\in [d]\setminus \{t\}} 2^{\rho_\ell-\tau_\ell}\right) \sum_{r=1}^{2^{\rho_t-\tau_t}} \left(\frac{-1}{2^{\rho_t+1}}+\frac{r}{2^{\rho_t}}- \frac{1}{2^{\tau_t+1}} \right)^2\\
&= \nu(\bm \tau) \sum_{t=1}^d \frac{1}{2^{\rho_t-\tau_t}} \sum_{r=1}^{2^{\rho_t-\tau_t}} \left(\frac{-1}{2^{\rho_t+1}}+\frac{r}{2^{\rho_t}}- \frac{1}{2^{\tau_t+1}} \right)^2.
\end{align*}
Now, for $\rho=\rho_t$, $\tau=\tau_t$ and with $\alpha=2^{\rho-\tau}$,
    \begin{align*}
     \sum_{r=1}^{\alpha} &\left(\frac{-1}{2^{\rho+1}} + \frac{r}{2^\rho} - \frac{1}{2^{\tau+1}} \right)^2
     = \frac{1}{2^{2\rho+2}} \sum_{r=1}^{\alpha} \Bigl(2r-(\alpha+1)\Bigr)^2\\
     &= \frac{1}{2^{2\rho+2}} \Bigl(4 \frac{\alpha(\alpha+1)(2\alpha+1)}{6} - 4(\alpha+1) \frac{\alpha(\alpha+1)}{2}+\alpha (\alpha+1)^2\Bigr)^2\\
     &= \frac{1}{3}\frac{1}{2^{2\rho+2}} \alpha (\alpha^2-1)
     = \frac{1}{3} \frac{1}{2^{\rho+\tau+2}} (2^{2(\rho-\tau)}-1).
    \end{align*}
Hence,
\begin{align*}
\error(\bm \tau)&= \frac{1}{3} \nu(\bm \tau) \sum_{t=1}^d \frac{1}{2^{\rho_t-\tau_t}} \frac{1}{2^{\rho_t+\tau_t+2}} (2^{2(\rho_t-\tau_t)}-1)
= \frac{1}{3} \nu(\bm \tau) \sum_{t=1}^d \frac{1}{2^{2(\rho_t+1)}} (2^{2(\rho_t-\tau_t)}-1)\\
& = \frac{1}{12} \nu(\bm \tau) \sum_{t=1}^d \left(\frac{1}{2^{2\tau_t}}- \frac{1}{2^{2\rho_t}}\right),
\end{align*}
as claimed.
\end{proof}

In the next section we will also consider the error with respect to an arbitrary site. This can be handled with the aid of the following remark (which is folklore, can be found in \cite{BBG17}, and actually holds in greater generality than needed here, see \cite[Lemma 3.2 (a)]{FG22}). 

\begin{remark}\label{rem:batching-error}
 Let $Y$ be a nonempty subset of $X$, $s\in \R$. Then,
   \begin{equation*}
        \sum_{x\in Y} \nu(\bm \rho)  \|x-s\|_2^{2} = \error(Y) +  \nu(\bm \rho) \cdot |Y| \cdot\bigl\|c(Y)-s\bigr\|_2^{2},
   \end{equation*}
hence, in particular, the error on the left is minimized for $s=c(Y)$.
\end{remark}

\begin{proof} Since 
$\|x-s\|_2^{2}= \|x-c\|_2^{2} + 2(x-c)^T(c-s) +\|c-s\|_2^{2}$  and 
 \begin{equation*}
     \sum_{x\in Y} (x-c)^T(c-s) = \left(\Bigl(\sum_{x\in Y} x\Bigr)-|Y| c\right)^T(c-s) = 0,
 \end{equation*}
we have
 \begin{align*}
     \sum_{x\in Y} \nu(\bm \rho) \|x-s\|_2^{2} & = \left(\sum_{x\in Y} \nu(\bm \rho) \|x-c\|_2^{2}\right) + \nu(\bm \rho) \cdot |Y| \cdot \|c-s\|_2^{2} \\
     &= \error(Y) + \nu(\bm \rho) \cdot |Y| \cdot \|c-s\|_2^{2},
 \end{align*}
 which also implies the second assertion.
\end{proof}

\section{Proof of \cref{th:resolution-coreset}: The case $d=1$} \label{sec:proof1D}

While up to now all computations were exact for arbitrary dimension $d$ we will next derive estimates for the costs of clustering in dimension $1$. In \cref{sec:proof-d} these results will, in effect, be applied for each line parallel to any coordinate axis through points of $\coresetX$, and we will formulate them already with a view towards their later use.

Since, in this section, we are considering exclusively the $1$-dimensional situation we will adopt the simplified notation 
\begin{equation*}
   d=1, \quad  \rho={\bm \rho}, \quad \tau={\bm \tau}, 
\end{equation*}
and do not distinguish between the $1$-dimensional vectors $x_j$, $\tilde{x}_q$ and their coordinates $(x_j)_{1}$, $(\tilde{x}_j)_{1}$, respectively.

\subsubsection*{Best unconstrained least squares clustering}
We begin by computing the cost $\OPT_{\hat{k}}(X)$ of a \emph{best unconstrained least squares clustering} of the $1$-dimensional resolution set $X$ into 
\begin{equation*}
    \hat{k}=2^{\lceil \log(k)\rceil}
\end{equation*}
clusters. The following result will later be used as a lower bound for $\cost(X,C,S)$ for any choice of $C$ and $S$.

\begin{theorem}\label{thm:1D-clustering}
    Let $\gamma=\lceil \log(k)\rceil$, $\hat{k}=2^{\gamma}$, and let $\hat{C}=(C_1,\ldots,C_{\hat{k}})$ be an optimal unconstrained least-squares clustering of $X$. Then, each cluster contains the same number $2^{\rho-\gamma}$ of points of $X$, the centroids $c_i=c(C_i)$ of
    the $\hat{k}$ clusters are the equidistant points
\begin{equation*}
   c_{i}= \frac{1}{2\hat{k}} + \frac{i-1}{\hat{k}} = \frac{1}{2^{\gamma+1}} + \frac{i-1}{2^\gamma} \qquad (i\in [\hat{k}]),
\end{equation*}
and 
\begin{equation*}
    \OPT_{\hat{k}}(X)= \frac{1}{3} \frac{1}{2^{2(\rho+1)}} \bigl(2^{2(\rho-\gamma)}-1\bigr)
    = \frac{1}{3} \frac{1}{2^{2(\rho+1)}} \left(\frac{2^{2\rho}}{\hat{k}^2}-1\right).
\end{equation*}
\end{theorem}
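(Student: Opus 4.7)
The plan is threefold: first show that every optimal unconstrained least-squares clustering of $X$ is supported on blocks of consecutive points, then determine the block sizes by a convexity argument, and finally read off the centroids and cost. The main obstacle is the first (contiguity) step; once it is in place, the rest is Jensen's inequality combined with the explicit one-dimensional computation already carried out in \cref{le:resolution_batch_error}.

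For the contiguity step, observe that at any optimum the sites must coincide with cluster centroids and, for fixed sites, the best assignment sends each $x_j$ to a nearest site. The resulting Voronoi partition of $\R$ into $\hat{k}$ intervals forces the support of each $C_i$ to be a block of consecutive indices of $X$ (this is the unconstrained analogue of \cref{prop:power-diagram}). Denote the sizes of the blocks by $n_1,\ldots,n_{\hat{k}}$ with $\sum_i n_i = 2^\rho$. By \cref{rem:batching-error} the optimal site for each block is its centroid, and a direct calculation (the $d=1$ instance of \cref{le:resolution_batch_error}, applied to a block of $m$ consecutive points of spacing $h = 2^{-\rho}$ and weight $\nu = 2^{-\rho}$) yields the per-block cost $\nu h^2 \cdot \frac{m(m^2-1)}{12} = \frac{m(m^2-1)}{12\cdot 2^{3\rho}}$; summing gives
$$\cost(X,C) \; = \; \frac{1}{12\cdot 2^{3\rho}}\left(\sum_{i=1}^{\hat{k}} n_i^3 \; - \; 2^\rho\right).$$

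Since $x\mapsto x^3$ is strictly convex on $[0,\infty)$ and $\hat{k} = 2^\gamma$ divides $n = 2^\rho$ (the relevant range being $\gamma\le\rho$), Jensen's inequality forces $n_i = 2^{\rho-\gamma}$ at the optimum. The $i$th block then comprises the indices $(i-1)2^{\rho-\gamma}+1,\ldots,i\cdot 2^{\rho-\gamma}$, so its centroid is the midpoint $c_i = \frac{1}{2^{\gamma+1}} + \frac{i-1}{2^\gamma}$, as claimed. Substituting $\sum_i n_i^3 = 2^\gamma\cdot 2^{3(\rho-\gamma)} = 2^{3\rho-2\gamma}$ in the cost formula yields
$$\OPT_{\hat{k}}(X) \; = \; \frac{2^{3\rho-2\gamma}-2^\rho}{12\cdot 2^{3\rho}} \; = \; \frac{2^{2(\rho-\gamma)}-1}{3\cdot 2^{2(\rho+1)}},$$
which matches the claimed optimum.
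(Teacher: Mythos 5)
Your proof is correct and follows the same overall architecture as the paper's: establish that optimal clusters consist of consecutive points, show the block sizes must be equal, then read off centroids and cost from the explicit one-dimensional sum. The differences are local but worth noting. For contiguity, the paper simply invokes \cref{prop:power-diagram} (a power diagram with an unconstrained optimum collapses to a Voronoi partition), whereas you re-derive the Voronoi argument directly; both carry the same unstated caveat about boundary ties, which the paper dispatches with a ``without loss of generality integer'' remark. For the block sizes, the paper relaxes the integer program to a smooth convex NLP and works through the Karush--Kuhn--Tucker conditions to conclude $\alpha_1=\dots=\alpha_{\hat k}$; your replacement by Jensen's inequality applied to $x\mapsto x^3$ on the simplex $\sum_i n_i=2^\rho$ is genuinely more elementary and, because $2^{\rho-\gamma}$ is an integer, immediately yields the unique minimizer over the integers without any relaxation step. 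Your per-block cost $\frac{m(m^2-1)}{12\cdot 2^{3\rho}}$ agrees with the paper's $\frac{1}{3}\frac{1}{2^{3\rho+2}}\alpha_i(\alpha_i^2-1)$, and the final arithmetic matches the stated value of $\OPT_{\hat k}(X)$. One small presentational caution: \cref{le:resolution_batch_error} as stated only covers batches whose size is a power of two, so you should say that you are reusing the computation inside its proof (which holds for arbitrary $m$) rather than the lemma itself.
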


\begin{proof} Since, by \cref{prop:power-diagram}, $\hat{C}$ admits a power diagram, the points in each cluster are consecutive. Without loss of generality we may also assume that $\hat{C}$ is integer. So, with $\beta_{\hat{k}+1}=n$, 
\begin{equation*}
    \alpha_i=|C_i|, \quad \beta_i = \sum_{\ell=1}^{i-1} \alpha_\ell, \quad 
    I_i=[\beta_{i+1}]\setminus [\beta_{i}] \qquad \bigl(i\in [\hat{k}]\bigr)
\end{equation*}
(and the standard interpretation of empty sums and $[0]=\emptyset$) we have for $j\in I_i$
\begin{equation*}
    x_j = \frac{1}{2^{\rho+1}} + \frac{j-1}{2^\rho} = \frac{\beta_i}{2^{\rho}} + \frac{1}{2^{\rho+1}} +\frac{j-1-\beta_i}{2^{\rho}} = \frac{\beta_i}{2^{\rho}} + x_{j-\beta_i}.
\end{equation*}
Hence
\begin{align*}
    c_i& = \frac{1}{\alpha_i} \sum_{j\in I_i} x_j 
    = \frac{\beta_i}{2^{\rho}} + \frac{1}{\alpha_i} \sum_{j=1}^{\alpha_i} x_j
    = \frac{\beta_i}{2^{\rho}} +\frac{1}{\alpha_i} \sum_{j=1}^{\alpha_i} \left(\frac{1}{2^{\rho+1}} + \frac{j-1}{2^\rho}\right)\\
    &= \frac{\beta_i}{2^{\rho}} + \frac{1}{\alpha_i}\left(\frac{\alpha_i}{2^{\rho+1}} + \frac{1}{2^\rho} \sum_{j=1}^{\alpha_i}(j-1)\right)
    = \frac{\beta_i}{2^{\rho}} + \frac{\alpha_i}{2^{\rho+1}}.   
\end{align*}
Therefore we obtain as $C_i$'s contribution to the cost of the clustering
 \begin{align*}
 \error(C_i) & = \sum_{j=1}^{\alpha_i} \nu (\rho) \Bigl(x_{j}-c_i\Bigr)^2
 = \nu(\rho)
 \sum_{j=1}^{\alpha_i} \left(\frac{1}{2^{\rho+1}} + \frac{j-1}{2^\rho}-\frac{\alpha_i}{2^{\rho+1}}\right)^2\\
 &=
 \frac{1}{2^{3\rho+2}}\sum_{j=1}^{\alpha_i} \Bigl((\alpha_i+1)-2j\Bigr)^2
 = \frac{1}{2^{3\rho+2}}\sum_{j=1}^{\alpha_i} \Bigl((\alpha_i+1)^2-4(\alpha_i+1)j+4j^2\Bigr)\\
 &=
\frac{1}{2^{3\rho+2}} \left(-\alpha_i(\alpha_i+1)^2+ \frac{2}{3} \alpha_i(\alpha_i+1)(2\alpha_i+1)\right)
=
 \frac{1}{3} \frac{1}{2^{3\rho+2}} \alpha_i(\alpha_i^2-1).
 \end{align*}
Since $\hat{C}$ is optimal the cluster cardinalities minimize $\sum_{i=1}^{\hat{k}} \alpha_i(\alpha_i^2-1)$ (as a function of the $\alpha_i$) under the constraints $\sum_{i=1}^{\hat{k}} \alpha_i=n$ and $\alpha_i\in \N\cup \{0\}$. Now note that the $i$th summand of the objective function is strictly increasing in $\alpha_i$. Hence, in the minimum we must have $\alpha_i \in \N$ for all $i \in [\hat{k}]$.

Let us now discard the integrality condition and consider the constrained nonlinear program
\begin{equation*}
\begin{array}{cccccccc}
 	&           & \multicolumn{4}{c}{\displaystyle \min\quad \sum_{i=1}^{\hat{k}} \alpha_i(\alpha_i^2-1)} && \\[.4cm]
 	& \qquad & & \displaystyle \sum_{i=1}^{\hat{k}} \alpha_i & = & n  &\quad & \\[.4cm]
 	 	& \qquad & & \alpha_{i} & \ge & 1  &\quad & \bigl(i \in [\hat{k}]\bigr)\\
 \end{array}
 \end{equation*}
in the real variables $\alpha_1,\ldots,\alpha_k$. As all occurring functions are continuously differentiable and the objective function is convex in the nonnegative orthant we can apply the Karush-Kuhn-Tucker Theorem and obtain the conditions 
\begin{align*}
    &(3\alpha_1^2-1,\ldots, 3\alpha_k^2-1) + \sum_{i=1}^{\hat{k}} \mu_i (-1,\ldots,-1) + \lambda (1,\ldots,1)=0,\\
    & \sum_{i=1}^{\hat{k}}\alpha_i=n, \qquad \alpha_1,\ldots, \alpha_{\hat{k}} \ge 1, \qquad
     \mu_1,\ldots, \mu_{\hat{k}} \ge 0, \qquad  \sum_{i=1}^{\hat{k}} \mu_i \alpha_i=0
\end{align*}
involving the Lagrange parameters $\mu_i$ and $\lambda$. Since $\alpha_i\ge 1$ and $\mu_i\ge 0$ for all $i\in [\hat{k}]$, the complementary slackness condition yields  $\mu_1=\ldots=\mu_{\hat{k}}=0$, and the first condition reads
\begin{equation*}
    (3\alpha_1^2-1+\lambda,\ldots, 3\alpha_{\hat{k}}^2-1+\lambda) = 0.
\end{equation*}
This implies that all $\alpha_i$ coincide. Thus we obtain
\begin{equation*}
    \alpha_1=\ldots=\alpha_{\hat{k}}=\nicefrac{n}{\hat{k}}= 2^{\rho-\gamma} \in \N, \qquad \beta_i=(i-1)2^{\rho-\gamma} \quad \bigl(i\in [\hat{k}]\bigr)
\end{equation*} 
and
\begin{equation*}
    c_i = \frac{\beta_i}{2^\rho}+ \frac{\alpha_i}{2^{\rho+1}}= \frac{1}{2^{\gamma+1}}+\frac{i-1}{2^\gamma} \qquad \bigl(i\in [\hat{k}]\bigr),
\end{equation*}
as claimed. Also, $\error(C_1)=\ldots= \error(C_{\hat{k}}) =\error(\gamma)$ and hence
\begin{equation*}
    \OPT_{\hat{k}}(X)= 2^\gamma \error(\gamma) 
    =  2^\gamma \cdot \frac{1}{3} \frac{1}{2^{3\rho+2}} 2^{\rho-\gamma}\bigl(2^{2(\rho-\gamma)}-1\bigr)
    = \frac{1}{3} \frac{1}{2^{2(\rho+1)}} \bigl(2^{2(\rho-\gamma)}-1\bigr),
\end{equation*}
which completes the proof.
\end{proof}

Note that the centroids (and hence, by \cref{rem:batching-error} best choices of sites) are exactly the points of the data set $X(\gamma)$ and thus the cluster error is the batch error $\error(\gamma)$. As a corollary we obtain the desired lower bound.

\begin{corollary}\label{cor:1-dim-bound}
    Let $S$ be a set of $k$ sites and $C\in \Cs_\Kappa(k, X,\Omega)$. Then  
    \begin{equation*}
        \cost (X,C,\apdsites) \ge \OPT_k(X) \ge \frac{1}{3} \frac{1}{2^{2(\rho+1)}} \left(\frac{2^{2(\rho-1)}}{k^2}-1\right).
    \end{equation*}
\end{corollary}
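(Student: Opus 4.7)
The plan is to establish the two inequalities in the chain separately and then reconcile the exponents.

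For the first inequality $\cost(X,C,S) \ge \OPT_k(X)$, I would start from the observation that the quantity $\OPT_k(X)$ is the optimum of a \emph{relaxation} of the problem defining $\cost(X,C,S)$: no weight constraints are imposed, and the sites are chosen to be the (optimal) cluster centroids. Concretely, for any fixed $S$, use that the pointwise cost satisfies
\begin{equation*}
\sum_{i=1}^{k} \xi_{ij}\,\|x_j-s_i\|_2^2 \;\ge\; \min_{i\in[k]}\|x_j-s_i\|_2^2,
\end{equation*}
since the $\xi_{ij}$ form a convex combination. Summing over $j$ shows that $\cost(X,C,S)$ is bounded below by the cost of the integer (but possibly weight-violating) clustering assigning each $x_j$ to its nearest site in $S$. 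Passing from these sites to the centroids of this new clustering can only decrease the cost by \cref{rem:batching-error}, and the resulting value is a feasible value of the unconstrained $k$-means objective, hence $\ge \OPT_k(X)$.

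For the second inequality, I would exploit the monotonicity of the unconstrained $k$-means optimum in the number of clusters: since $\hat k = 2^{\lceil \log k\rceil} \ge k$, allowing more clusters (for instance by splitting off singletons, or simply leaving clusters empty) cannot increase the optimum, so $\OPT_k(X) \ge \OPT_{\hat k}(X)$. Now \cref{thm:1D-clustering} gives the exact value
\begin{equation*}
\OPT_{\hat k}(X) \;=\; \frac{1}{3}\,\frac{1}{2^{2(\rho+1)}}\bigl(2^{2(\rho-\gamma)}-1\bigr),
\end{equation*}
with $\gamma=\lceil\log k\rceil$.

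Finally, I need to translate $\gamma$ back into $k$. The elementary estimate $2^{\gamma}\le 2k$, valid for all $k\in\N$ by definition of the ceiling, yields $2^{2(\rho-\gamma)}=2^{2\rho}/2^{2\gamma}\ge 2^{2\rho}/(4k^2)=2^{2(\rho-1)}/k^2$, and plugging this into the previous display closes the chain. I do not expect any real obstacle here; the only point requiring a brief argument is the first inequality, where one must be careful that $C$ can be fractional and $S$ is arbitrary, which is exactly what the convex combination argument above handles.
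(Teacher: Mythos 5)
Your proposal is correct and follows essentially the same route as the paper: monotonicity of the unconstrained optimum in the number of clusters, the exact value of $\OPT_{\hat k}(X)$ from \cref{thm:1D-clustering}, and the bound $\hat k\le 2k$. The only difference is that you spell out the first inequality $\cost(X,C,S)\ge\OPT_k(X)$ in detail (correctly), whereas the paper simply asserts it; a slightly shorter justification is to note that $\cost(X,C,S)\ge\cost(X,C)$ by \cref{rem:batching-error} and that $C$ is itself a feasible unconstrained clustering.
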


\begin{proof}
    Since
    \begin{equation*}
    k \le \hat{k}=2^{\lceil \log(k)\rceil}< 2^{\log(k)+1}\le 2k
\end{equation*}
and
        \begin{equation*}
    \OPT_{\hat{k}}(X) \le \OPT_k(X) \le \cost (X,C,\apdsites),
    \end{equation*}
\cref{thm:1D-clustering} implies
    \begin{equation*}
\frac{1}{3} \frac{1}{2^{2(\rho+1)}} \left(\frac{2^{2(\rho-1)}}{k^2}-1\right) < \frac{1}{3} \frac{1}{2^{2(\rho+1)}}  \left(\frac{2^{2\rho}}{\hat{k}^2}-1\right) \le \cost (X,C,\apdsites),
    \end{equation*}
as asserted.
\end{proof}

\subsubsection*{Coreset property \cref{def:property_a}}
Next, we prove an even stronger version of coreset property \labelcref{def:property_a}.

\begin{lemma} \label{le:coreset_property_a_line_resolution}
    Let $\Delta = 2^{\tau} \error(\tau)$, $S\subset \R$ a set of $k$ sites, $\coresetC\in\Cs_\Kappa(k,\coresetX,\coresetOmega)$, and $C = g(\coresetC)$, where $\extension: \Cs_\Kappa(k,\coresetX,\coresetOmega) \rightarrow \Cs_\Kappa(k,X,\Omega)$ is the standard extension defined in the previous section. Then
    \begin{equation*}
        \cost (X,C,S) = \cost (\coresetX,\coresetC, S) + \Delta
\end{equation*}
\end{lemma}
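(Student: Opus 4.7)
The plan is to unfold the cost of $C$ on $X$ by grouping the inner sum over $[n]$ into batches $B_q$, and inside each batch use the batching-error remark (\cref{rem:batching-error}) to separate the purely geometric merger cost from the cost that survives at the coreset resolution.

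First I would exploit the definition of the extension: since $C=g(\coresetC)$ satisfies $\xi_{ij}=\tilde{\xi}_{ip(j)}$, the assignment coefficient is constant on each batch. Partitioning $[n]$ into the preimages $p^{-1}(q)=\{j:x_j\in B_q\}$ and using the uniform weight $\omega_j=\nu(\bm\rho)$ gives
\begin{equation*}
\cost(X,C,S)=\sum_{i=1}^{k}\sum_{q=1}^{\tilde{n}}\tilde{\xi}_{iq}\sum_{x\in B_q}\nu(\bm\rho)(x-s_i)^2.
\end{equation*}

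Next I would apply \cref{rem:batching-error} with $Y=B_q$ and $s=s_i$, together with \cref{rem:centroid-merging} which identifies $c(B_q)=\tilde{x}_q$, and the relation $\nu(\bm\rho)\cdot|B_q|=\tilde{\omega}_q=\nu(\bm\tau)$ established earlier. This rewrites the inner sum as $\error(B_q)+\tilde{\omega}_q(\tilde{x}_q-s_i)^2$, and since the batch error is independent of $q$ it equals $\error(\bm\tau)=\error(\tau)$. Substituting yields
\begin{equation*}
\cost(X,C,S)=\error(\tau)\sum_{i=1}^{k}\sum_{q=1}^{\tilde{n}}\tilde{\xi}_{iq}+\sum_{i=1}^{k}\sum_{q=1}^{\tilde{n}}\tilde{\xi}_{iq}\tilde{\omega}_q(\tilde{x}_q-s_i)^2.
\end{equation*}

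Finally I would observe that the second double sum is exactly $\cost(\coresetX,\coresetC,S)$, and that $\sum_{i=1}^{k}\tilde{\xi}_{iq}=1$ for every $q\in[\tilde{n}]$ because each coreset point is fully assigned in any clustering of $\Cs_\Kappa(k,\coresetX,\coresetOmega)$. Hence the prefactor of $\error(\tau)$ equals $\tilde{n}=2^{\tau}$ (in the $1$-dimensional setting of this section), giving the additive offset $\Delta=2^{\tau}\error(\tau)$ and proving the identity.

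There is no real obstacle here: the key conceptual step is simply recognising that the extension assigns weight uniformly inside each batch so that the batching-error decomposition applies term by term. The only thing to be careful about is the bookkeeping of weights — that $\nu(\bm\rho)|B_q|=\nu(\bm\tau)$ and that $\sum_i\tilde{\xi}_{iq}=1$ — both of which are built into the definitions of the merger $p$ and of $\Cs_\Kappa(k,\coresetX,\coresetOmega)$. Note that the resulting identity is stronger than coreset property \labelcref{def:property_a} (it holds with equality and without the $(1-\epsilon)$ factor), which is what the lemma asserts.
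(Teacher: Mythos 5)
Your proposal is correct and follows essentially the same route as the paper's proof: group the sum over $[n]$ into batches using the constancy of $\xi_{ij}$ on each $p^{-1}(q)$, apply \cref{rem:batching-error} with $c(B_q)=\tilde{x}_q$ from \cref{rem:centroid-merging}, and collect the $\tilde{n}=2^{\tau}$ copies of $\error(\tau)$ via $\sum_i\tilde{\xi}_{iq}=1$ while the remaining sum becomes $\cost(\coresetX,\coresetC,S)$ through $\nu(\rho)|B_q|=\nu(\tau)$. All the bookkeeping you flag matches the paper's computation exactly.
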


\begin{proof}
For $i\in [k]$, $j\in [n]$ and $q\in [\tilde{n}]$ let, as before, $\xi_{ij}$ and $\tilde{\xi}_{iq}$ denote the components of $C$ and $\coresetC$, respectively. Further recall, that the merging function maps each point of a batch $B_q$ to its centroid. Then, using \cref{rem:batching-error}, we have
\begin{align*}
    \cost &(X,C,S) 
    = \sum_{i=1}^k \sum_{j=1}^{n}  \nu(\rho) \cdot \xi_{ij}\cdot (x_j-s_i)^2 
     = \sum_{i=1}^k \sum_{q=1}^{\tilde{n}}\tilde\xi_{iq} \sum_{x_j \in B_q}   \nu(\rho) \cdot (x_j - s_i)^2 \\
   & = \sum_{i=1}^k \sum_{q=1}^{\tilde{n}} \tilde{\xi}_{iq} \left( \error(\tau) +  \nu(\rho)\cdot |B_q| \cdot(x_q-s_i)^{2} \right) \\
   &= \tilde{n} \error(\tau) + \sum_{i=1}^k \sum_{q=1}^{\tilde{n}} \tilde{\xi}_{iq}  \frac{2^{\rho-\tau}}{2^\rho}(x_q-s_i)^{2}
   = 2^\tau \error(\tau) + \sum_{i=1}^k \sum_{q=1}^{\tilde{n}} \nu(\tau)\tilde{\xi}_{iq}(x_q-s_i)^{2}\\
   & = \Delta+\cost (\coresetX,\coresetC, S),
\end{align*}
which proves the assertion.
\end{proof}

\subsubsection*{Coreset property \cref{def:property_b}: Setup and good indices}
Not surprisingly, the proof of coreset property \labelcref{def:property_b} is much more involved, and the rest of this section will be devoted to that. First, we observe, that it follows from a more general inequality.

\begin{remark}\label{rem:reduction}
Suppose that there exists a constant $\Delta$ such that the inequality
\begingroup\leqnomode
	\begin{alignat}{3}
	    \cost\bigl(\coresetX,p(C),S\bigr) + \Delta \le (1+\epsilon)  \cost\bigl(X,C,S\bigr)
  \tag{b${}^\prime$} \label{def:property_bprime}
	\end{alignat}
	\endgroup
holds for every set $S$ of $k$ sites and any integer clustering $C\in \Cs_\Kappa(k,X,\Omega)$ which admits a strongly compatible power diagram. Then, for every such $S$,
\begin{equation*}
     \cost\bigl(\coresetX,S\bigr) + \Delta \le (1+\epsilon)  \cost(X,S).
\end{equation*}
\end{remark}

\begin{proof}
    First note that, by \cref{prop:power-diagram}, there exists an integer clustering $C^*$ which admits a strongly compatible power diagram and attains $\cost(X,S)$. Then, of course,
\begin{equation*}
     \cost(\coresetX,S) \le  \cost\bigl(\coresetX,p(C^*),S\bigr) \qquad \text{and} \qquad
     \cost\bigl(X,C^*,S\bigr)  = \cost(X,S).
\end{equation*}
Hence, by assumption,
\begin{equation*}
     \cost\bigl(\coresetX,S\bigr) + \Delta \le \cost\bigl(\coresetX,p(C^*),S\bigr)+ \Delta \le 
     (1+\epsilon)  \cost(X,C^*,S) = (1+\epsilon)  \cost(X,S).
\end{equation*}
\end{proof}

According to \cref{rem:reduction}, it suffices to prove \cref{def:property_bprime}. So let, in the following, $S$ be a set of $k$ sites, let $C=(C_1,\ldots,C_k)\in \Cs_\Kappa(k,X,\Omega)$ be an integer clustering which admits the strongly compatible power diagram $\apd = (P_1,\ldots,P_k)$, and set $\coresetC=p(C)$.

We split the cost of the clustering $\coresetC$ of the resolution coreset into two components which reflect whether batches are fully contained in one of the cells of $\apd$ or split by the diagram.
More precisely, for each $q\in [\tilde{n}]$, let $i(q)$ denote the unique index $i\in [k]$ such that $x_q \in P_i$, and set
\begin{equation*}
    \tilde{I}^{(+)} = \bigl\{ q \in [\tilde{n}] : B_q \subset P_{i(q)} \bigr\}, \qquad \tilde{I}^{(-)} = [\tilde{n}] \setminus \tilde{I}^{(+)} .
\end{equation*}
The elements of $\tilde{I}^{(+)} $ and $\tilde{I}^{(-)} $ will be called \emph{good} and \emph{bad indices}, respectively, as the latter require significantly more care in terms of cost estimations. Note that for good indices $q$ the clusterings $C$ and $\coresetC$ \enquote{coincide} in the sense that
\begin{equation*}
     \tilde{\xi}_{iq}=\xi_{ij} \qquad \text{for all $i\in [k]$, $ j\in [n]$, $q\in \tilde{I}^{(+)}$ with $x_j\in B_q$}.
\end{equation*}
Also, as the points of $X$ in, both, the batches $B_q$ and the intervals $P_i$ are consecutive, we obtain the following bound.

\begin{remark}\label{rem:k-1}
\begin{equation*}
    |\tilde{I}^{(-)} | \le k-1.
\end{equation*}
\end{remark}

The cost of $C$ can be expressed in terms of the good and bad indices. In fact,
\begin{equation*}
        \cost(X,C,S)  = \sum_{i=1}^k \sum_{j=1}^{n} \nu(\rho) \cdot \xi_{ij}\cdot (x_j-s_i)^2 = \gamma(\tilde{I}^{(+)}) + \gamma(\tilde{I}^{(-)}),
\end{equation*}
where 
\begin{equation*}
    \gamma(\tilde{I}^{(\pm)} )= \sum_{i=1}^k \sum_{q \in \tilde{I}^{(\pm)} } \sum_{x_j \in B_q}  \nu(\rho) \cdot \xi_{ij}\cdot (x_j-s_i)^2.
\end{equation*}
Similarly, we use the abbreviations
\begin{equation*}
    \tilde{\gamma}(\tilde{I}^{(\pm)} )= \sum_{i=1}^k \sum_{q \in \tilde{I}^{(\pm)} } \nu(\tau) \cdot \tilde{\xi}_{iq}\cdot (\tilde{x}_q-s_i)^2
\end{equation*}
to denote the parts of $\cost(\tilde{X},\tilde{C},S)$ associated with $\tilde{I}^{(\pm)} $.

The cost for the good indices relates directly to the cost incurred by the corresponding original data points. In fact, by \cref{rem:batching-error} we have
\begin{align*}
    \gamma(\tilde{I}^{(+)} )&
    =  \sum_{i=1}^k \sum_{q \in \tilde{I}^{(+)} } \sum_{x_j \in B_q\cap C_i}  \nu(\rho) \cdot (x_j-s_i)^2\\
    & =  \sum_{i=1}^k \sum_{q \in \tilde{I}^{(+)} } \left( \error(B_q\cap C_i) + \nu(\rho) \cdot |B_q\cap C_i| \cdot (\tilde{x}_q-s_i)^2 \right)\\
    & = \sum_{i=1}^k \sum_{q \in \tilde{I}^{(+)} } \tilde{\xi}_{iq}\left( \error(\tau) + \nu(\tau) \cdot (\tilde{x}_q-s_i)^2 \right).
\end{align*}
Hence, we obtain for the good indices,

\begin{remark}\label{rem:good_indices_equality}
\begin{equation*}
 \gamma(\tilde{I}^{(+)} )
 = |\tilde{I}^{(+)} |\cdot \error(\tau) + \tilde{\gamma}(\tilde{I}^{(+)} ).
\end{equation*}
\end{remark}

\subsubsection*{Bad indices}

Now we deal with the indices in $\tilde{I}^{(-)} $. Naturally, the clustering error for the bad indices depends more specifically on the batch sizes. In the following, we assume that $\tau = \tau^*$ is chosen appropriately. In view of the extension of our results to the $d$-dimensional case in \cref{sec:proof-d} we show slightly more than what would otherwise be needed in this section. So, let $k^* \in \N$ such that $k\le k^*$ and set
\begin{equation*}
 \tau^* = \left\lceil \log\left(\frac{2^{\nicefrac{5}{3}}k^*}{\epsilon^{\nicefrac{2}{3}}}\right) \right\rceil.
\end{equation*}
Of course, we also assume that $\rho\ge \tau^*$ as before.

Next we deal with the \emph{total deviation}
    \begin{equation*}
    \dev(\tilde{I}^{(-)} )= \sum_{i=1}^k \sum_{q\in \tilde{I}^{(-)} } \sum_{x_j\in B_q} \xi_{ij} \cdot \nu(\rho) \cdot (\tilde{x}_q- x_j)^2
    \end{equation*}
caused by the bad indices. 
Let us first point out how $\dev(\tilde{I}^{(-)} )$ relates to the batch error.

\begin{remark}\label{rem:total-dev}
    \begin{equation*}
    \dev(\tilde{I}^{(-)} ) = |\tilde{I}^{(-)} | \cdot \error(\tau^*).
    \end{equation*}
\end{remark}

\begin{proof}
    By \cref{rem:centroid-merging} we have
\begin{align*}
    \dev(\tilde{I}^{(-)} ) & = \sum_{q\in \tilde{I}^{(-)} } \sum_{x_j\in B_q} \sum_{i=1}^k \xi_{ij} \cdot \nu(\rho) \cdot (\tilde{x}_q- x_j)^2
    = \sum_{q\in \tilde{I}^{(-)} } \sum_{x_j\in B_q} \nu(\rho) \cdot (\tilde{x}_q- x_j)^2\\
    &= \sum_{q\in \tilde{I}^{(-)} } \sum_{x_j\in B_q} \nu(\rho) \cdot (c(B_q)- x_j)^2 
    = \sum_{q\in \tilde{I}^{(-)} } \error( B_q)
    = |\tilde{I}^{(-)} | \cdot \error(\tau^*).
\end{align*}
\end{proof}

The next lemma provides an upper bound for the total deviation. It will employ \cref{le:resolution_batch_error} in its following $1$-dimensional form. 

\begin{remark} \label{rem:resolution_batch_error}
    \begin{equation*}
        \error(\tau) =  \frac{1}{12}\frac{1}{2^{\rho}}  \left(\frac{1}{2^{2\tau}}- \frac{1}{2^{2\rho}}\right)= \frac{1}{3}\frac{1}{2^{\rho}}  \frac{2^{2(\rho-\tau)}-1}{2^{\rho+\tau+2}}.
    \end{equation*}
\end{remark}

\begin{lemma} \label{le:bound_bad_indices}
    \begin{equation*}
    \dev(\tilde{I}^{(-)} ) \le \frac{\epsilon^2}{8}  \OPT_k(X) < \frac{\epsilon}{6} \cost(X,C,S).
    \end{equation*}
\end{lemma}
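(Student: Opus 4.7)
The plan is to establish the two chained inequalities separately. The first, $\dev(\tilde I^{(-)}) \le \frac{\epsilon^2}{8}\OPT_k(X)$, is the real work and will follow by bounding $\dev(\tilde I^{(-)})$ from above, bounding $\OPT_k(X)$ from below, and then checking an explicit arithmetic inequality. The second, strict, inequality $\frac{\epsilon^2}{8}\OPT_k(X) < \frac{\epsilon}{6}\cost(X,C,S)$ then comes essentially for free: since $\cost(X,C,S)\ge\OPT_k(X)>0$ by \cref{cor:1-dim-bound} (strictness because $\epsilon\le 1/2$ forces $2^\rho\ge 2^{\tau^*}\ge 2^{7/3}k>2k$, so the displayed lower bound is positive), and since $\epsilon^2/8<\epsilon/6$ whenever $\epsilon<4/3$, in particular for $\epsilon\le 1/2$, chaining these gives the claim.

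For the main inequality I would combine \cref{rem:k-1} and \cref{rem:total-dev} to write $\dev(\tilde I^{(-)}) = |\tilde I^{(-)}|\error(\tau^*) \le (k-1)\error(\tau^*)$, and then substitute the closed form $\error(\tau^*) = (2^{-2\tau^*}-2^{-2\rho})/(12\cdot 2^\rho)$ from \cref{rem:resolution_batch_error}. The definition of $\tau^*$ yields $2^{2\tau^*}\ge 2^{10/3}k^2/\epsilon^{4/3}$, and \cref{cor:1-dim-bound} provides $\OPT_k(X) \ge (2^{2\rho-2}-k^2)/(12\cdot 2^{2\rho}k^2)$. After clearing denominators, the target reduces to
\[
    8(k-1)k^2\bigl(2^{2(\rho-\tau^*)}-1\bigr) \;\le\; \epsilon^2\cdot 2^\rho\bigl(2^{2\rho-2}-k^2\bigr).
\]

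To verify this I would split on $\rho-\tau^*$. If $\rho=\tau^*$, both sides of the $\error$ formula already vanish, so the inequality is trivial. If $\rho\ge\tau^*+1$, then $2^\rho\ge 2\cdot 2^{\tau^*}\ge 2^{8/3}k/\epsilon^{2/3}$; plugging this together with $\epsilon^{4/3}\le 2^{-4/3}$ (from $\epsilon\le 1/2$) into the right-hand side produces enough slack to establish the displayed inequality by direct calculation (the leading $4^{\rho-\tau^*}$ term on the right dominates that on the left, and the subtracted $k^2$ term is absorbed by $\epsilon^{4/3}\le 2^{-4/3}$). The main obstacle is precisely this arithmetic step: the constant $2^{5/3}$ in the definition of $\tau^*$ is fine-tuned so that the two sides balance to within exactly the right power of $2$, and careful bookkeeping of the exponents of $2$ and $\epsilon$ is required. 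Note in passing that naively estimating $2^{2(\rho-\tau^*)}-1$ by $2^{2(\rho-\tau^*)}$ turns out to be too lossy when $\rho$ is close to $\tau^*$, which is exactly why the case split (or an equivalent sharper bound) is essential rather than decorative.
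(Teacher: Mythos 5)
Your proposal is correct and follows essentially the same route as the paper: the same chain $\dev(\tilde I^{(-)})=|\tilde I^{(-)}|\cdot\error(\tau^*)\le(k-1)\error(\tau^*)$ via \cref{rem:k-1,rem:total-dev,rem:resolution_batch_error}, the same lower bound on $\OPT_k(X)$ from \cref{cor:1-dim-bound}, the same reduction to an explicit arithmetic inequality, and the same easy passage to the second (strict) inequality. The only difference is in the final verification: where you split on $\rho=\tau^*$ versus $\rho\ge\tau^*+1$ to avoid the (correctly identified) too-lossy estimate $2^{2(\rho-\tau^*)}-1\le 2^{2(\rho-\tau^*)}$, the paper instead reduces to $k=k^*$ by a monotonicity argument and groups the difference as $2^{2\rho}\bigl(\frac{k^*-1}{2^{3\tau^*}}-\frac{\epsilon^2}{32(k^*)^2}\bigr)-\bigl(\frac{k^*-1}{2^{\tau^*}}-\frac{\epsilon^2}{8}\bigr)$, checking that both parentheses have the favourable sign, which disposes of the $-1$ terms without a case distinction.
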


\begin{proof}
By \cref{rem:k-1,rem:total-dev,rem:resolution_batch_error},
\begin{align*}
    \dev(\tilde{I}^{(-)} ) = |\tilde{I}^{(-)} | \cdot \error(\tau^*) 
    \le (k-1) \cdot \frac{1}{3} \frac{1}{2^{2\rho+\tau^*+2}} \bigl(2^{2(\rho-\tau^*)}-1\bigr),
\end{align*}
and, by \cref{cor:1-dim-bound},
\begin{equation*}
 \OPT_k(X) \ge \frac{1}{3} \frac{1}{2^{2(\rho+1)}} \left(\frac{2^{2(\rho-1)}}{k^2}-1\right).
\end{equation*}
Thus 
    \begin{align*}
    \dev(\tilde{I}^{(-)} ) & - \frac{\epsilon^2}{8}  \OPT_k(X)
    \le \frac{1}{3} \frac{1}{2^{2(\rho+1)}} \left( \frac{k-1}{2^{\tau^*}} \Bigl(2^{2(\rho-\tau^*)}-1\Bigr) - \frac{\epsilon^2}{8} \Bigl(\frac{2^{2(\rho-1)}}{k^2}-1\Bigr) \right).
    \end{align*}
Setting 
\begin{align*}
     f(k)=\frac{k-1}{2^{\tau^*}} \Bigl(2^{2(\rho-\tau^*)}-1\Bigr) - \frac{\epsilon^2}{8} \Bigl(\frac{2^{2(\rho-1)}}{k^2}-1\Bigr)
    \end{align*}
we have for $k< k^*$
    \begin{align*}
     f(k+1)-f(k)= \frac{1}{2^{\tau^*}} \left(2^{2(\rho-\tau^*)} - 1\right) + \frac{\epsilon^2 \cdot 2^{2(\rho-1)}}{8}\left( \frac{2k+1}{k^2(k+1)^2} \right) \ge 0.
    \end{align*}
Hence $f(k)$ is increasing in $k$, and we have 
\begin{align*}
    \dev(\tilde{I}^{(-)} ) & - \frac{\epsilon^2}{8}  \OPT_k(X)
    \le \frac{1}{3} \frac{1}{2^{2(\rho+1)}} \left( \frac{k^*-1}{2^{\tau^*}} \Bigl(2^{2(\rho-\tau^*)}-1\Bigr) - \frac{\epsilon^2}{8} \Bigl(\frac{2^{2(\rho-1)}}{(k^*)^2}-1\Bigr) \right)\\
    &= \frac{1}{3} \frac{1}{2^{2(\rho+1)}} \left( 2^{2\rho} \Bigr(\frac{k^*-1}{2^{3\tau^*}} - \frac{\epsilon^2}{32(k^*)^2}\Bigr)
    - \Bigl( \frac{k^*-1}{2^{\tau^*}} - \frac{\epsilon^2}{8} \Bigr)\right),
    \end{align*}
and, to prove the first asserted inequality, it suffices to show that 
    \begin{align*}
    \frac{k^*-1}{2^{3\tau^*}} \le  \frac{\epsilon^2}{32(k^*)^2}, \qquad
    \frac{k^*-1}{2^{\tau^*}} \ge \frac{\epsilon^2}{8}.
    \end{align*}
To verify these inequalities, simply note that,
\begin{equation*}
   \frac{(k^*-1) (k^*)^2}{2^{3\tau^*}} \le \frac{(k^*-1)\epsilon^2}{2^5k^*} \le \frac{\epsilon^2} {32},
\end{equation*}
and, since $k^*\ge 2$ and $\epsilon \in (0,\nicefrac{1}{2}]$,
    \begin{equation*}
    \frac{k^*-1}{2^{\tau^*}} \ge \frac{k^*-1}{4k^*} \cdot \epsilon^{\nicefrac{2}{3}}
    = \frac{\epsilon^2}{8} \left(\frac{k^*-1}{k^*} \frac{2}{\epsilon^{\nicefrac{4}{3}}}\right)
    \ge \frac{\epsilon^2}{8}.
\end{equation*}
Finally note that $\nicefrac{\epsilon^2}{8} \le \nicefrac{\epsilon}{16} < \nicefrac{\epsilon}{6}$.
\end{proof}

The next lemma gives the desired bound for the bad indices. 

\begin{lemma} \label{le:inequality_bad_indices}
    \begin{align*}
          \tilde{\gamma}(\tilde{I}^{(-)} )  \le \gamma(\tilde{I}^{(-)} ) + \frac{5\epsilon}{6} \cost(X,C,S).
    \end{align*}
\end{lemma}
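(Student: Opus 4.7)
The plan is to rewrite $\tilde\gamma(\tilde I^{(-)}) - \gamma(\tilde I^{(-)})$ as a controlled sum of squared differences between the batch centroid $\tilde x_q$ (which equals $c(B_q)$ by \cref{rem:centroid-merging}) and the sub-cluster centroids $c_{q,i}:=c(B_q\cap C_i)$, and then to absorb the resulting cross-term via a Young-type inequality. Since $C$ is integer with a strongly compatible power diagram, the traces $B_q\cap C_i$ partition each $B_q$, so $\tilde\xi_{iq}=|B_q\cap C_i|/|B_q|$ and
\[
\tilde\gamma(\tilde I^{(-)}) \;=\; \sum_{q\in\tilde I^{(-)}}\sum_{i=1}^{k}\nu(\bm\rho)\,|B_q\cap C_i|\,(\tilde x_q-s_i)^2.
\]

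First, I would apply \cref{rem:batching-error} to each subbatch $B_q\cap C_i$ twice. Choosing the anchor $s=s_i$ yields $\gamma(\tilde I^{(-)}) = E + \sum_{q,i}\nu(\bm\rho)\,|B_q\cap C_i|\,(c_{q,i}-s_i)^2$, while the anchor $s=\tilde x_q$, combined with \cref{rem:total-dev} and the partition property $\sum_i|B_q\cap C_i|=|B_q|$, gives $\dev(\tilde I^{(-)}) = E + \sum_{q,i}\nu(\bm\rho)\,|B_q\cap C_i|\,(\tilde x_q-c_{q,i})^2$, where $E:=\sum_{q,i}\error(B_q\cap C_i)\ge 0$. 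Expanding the pointwise difference
\[
(\tilde x_q-s_i)^2-(c_{q,i}-s_i)^2 \;=\; (\tilde x_q-c_{q,i})^2 + 2(\tilde x_q-c_{q,i})(c_{q,i}-s_i)
\]
and applying Young's inequality $2|ab|\le \beta a^2+\beta^{-1}b^2$ to the cross term (with a parameter $\beta>0$ to be chosen) then summing against $\nu(\bm\rho)|B_q\cap C_i|$ and discarding the nonpositive $E$-remainder produces the master estimate
\[
\tilde\gamma(\tilde I^{(-)}) - \gamma(\tilde I^{(-)}) \;\le\; (1+\beta)\dev(\tilde I^{(-)}) + \tfrac{1}{\beta}\,\gamma(\tilde I^{(-)}).
\]

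Finally I would substitute $\dev(\tilde I^{(-)}) \le \frac{\epsilon^2}{8}\cost(X,C,S)$ from \cref{le:bound_bad_indices} and the trivial bound $\gamma(\tilde I^{(-)}) \le \cost(X,C,S)$, and pick $\beta=2/\epsilon$ to obtain
\[
\tilde\gamma(\tilde I^{(-)}) - \gamma(\tilde I^{(-)}) \;\le\; \bigl(\tfrac{\epsilon^2}{8} + \tfrac{3\epsilon}{4}\bigr)\cost(X,C,S) \;\le\; \tfrac{13\epsilon}{16}\cost(X,C,S) \;<\; \tfrac{5\epsilon}{6}\cost(X,C,S),
\]
where the middle step uses $\epsilon\le\nicefrac{1}{2}$ to absorb $\epsilon^2/8 \le \epsilon/16$. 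The main delicate point is the Young-parameter balancing: $\dev(\tilde I^{(-)})$ is only of order $\epsilon^2$ (smaller than the target by an order of $\epsilon$) while $\gamma(\tilde I^{(-)})$ can be of the full order of $\cost(X,C,S)$, so the asymmetric scaling $\beta\sim\epsilon^{-1}$ is essential to convert the quadratic smallness of the deviation into a linear-in-$\epsilon$ loss against the larger between-cluster contribution.
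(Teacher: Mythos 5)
Your proof is correct and follows essentially the same route as the paper's: both rest on the decomposition $\tilde{\gamma}(\tilde{I}^{(-)}) = \dev(\tilde{I}^{(-)}) + \gamma(\tilde{I}^{(-)}) + \text{cross term}$ (yours holding up to the nonnegative remainder $E$, which you correctly discard) and then absorb the cross term using \cref{le:bound_bad_indices} together with $\gamma(\tilde{I}^{(-)})\le\cost(X,C,S)$. The only cosmetic differences are that the paper expands $(\tilde{x}_q-s_i)^2$ pointwise over the data points $x_j$ rather than detouring through the sub-cluster centroids $c_{q,i}$, and bounds the cross term via Cauchy--Schwarz as $2\sqrt{\dev(\tilde{I}^{(-)})\cdot\gamma(\tilde{I}^{(-)})}\le \frac{\epsilon}{\sqrt{2}}\cost(X,C,S)$ instead of your Young-type splitting with $\beta=\nicefrac{2}{\epsilon}$ (which is just the parametrized form of the same estimate); both yield constants, $\nicefrac{1}{16}+\nicefrac{1}{\sqrt{2}}$ and $\nicefrac{13}{16}$ respectively, comfortably below $\nicefrac{5}{6}$.
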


\begin{proof}
Let us remark that the assertion can also be inferred from the more general result in \cite[Lemma 4.3]{FG22}. As a service to the reader we give an explicit direct proof of what we need here. In fact,
\begin{align*}
    \tilde{\gamma}(\tilde{I}^{(-)} ) & =  \sum_{i=1}^k \sum_{q \in \tilde{I}^{(-)} } \nu(\tau) \cdot \tilde{\xi}_{iq}\cdot (\tilde{x}_q-s_i)^2 \\
    &=  \sum_{i=1}^k \sum_{q \in \tilde{I}^{(-)} } \sum_{x_j\in B_q} \nu(\rho) \cdot \xi_{ij}\cdot \bigl((\tilde{x}_q-x_j)+(x_j-s_i)\bigr)^2\\
    & = \dev(\tilde{I}^{(-)} ) + \gamma(\tilde{I}^{(-)} ) + 2 \sum_{i=1}^k \sum_{q \in \tilde{I}^{(-)} } \sum_{x_j\in B_q} \nu(\rho) \cdot \xi_{ij}\cdot (\tilde{x}_q-x_j)(x_j-s_i).
\end{align*}
Further, using the Cauchy-Schwartz inequality,
\begin{align*}
 \Biggl(\sum_{i=1}^k & \sum_{q \in \tilde{I}^{(-)} } \sum_{x_j\in B_q} \nu(\rho) \cdot \xi_{ij}\cdot (\tilde{x}_q-x_j)(x_j-s_i)\Biggr)^2 \\
 &\le \Biggl(\sum_{i=1}^k \sum_{q \in \tilde{I}^{(-)} } \sum_{x_j\in B_q} \nu(\rho) \cdot \xi_{ij}\cdot (\tilde{x}_q-x_j)^2\Biggr) \cdot 
 \Biggl(\sum_{i=1}^k \sum_{q \in \tilde{I}^{(-)} } \sum_{x_j\in B_q} \nu(\rho) \cdot \xi_{ij}\cdot (x_j-s_i)^2\Biggr) \\
 &= \dev(\tilde{I}^{(-)} ) \cdot \gamma(\tilde{I}^{(-)} ).
\end{align*}
Since $\epsilon \in (0,\nicefrac{1}{2}]$ and $\nicefrac{1}{16}+\nicefrac{1}{\sqrt{2}}\le \nicefrac{5}{6}$, \cref{le:bound_bad_indices} hence yields
\begin{align*}
    \tilde{\gamma}(\tilde{I}^{(-)} ) & \le \dev(\tilde{I}^{(-)} ) + \gamma(\tilde{I}^{(-)} ) + 2 \left(\dev(\tilde{I}^{(-)} ) \cdot \gamma(\tilde{I}^{(-)} )\right)^{\nicefrac{1}{2}}\\
    &\le \gamma(\tilde{I}^{(-)} ) + \frac{\epsilon^2}{8} \OPT_k(X) + 2 \left(\frac{\epsilon^2}{8} \OPT_k(X) \cdot \gamma(\tilde{I}^{(-)} )\right)^{\nicefrac{1}{2}}\\
    &\le \gamma(\tilde{I}^{(-)} ) + \left(\frac{\epsilon^2}{8} + \frac{\epsilon}{\sqrt{2}}\right) \cost(X,C,S)
    \le \gamma(\tilde{I}^{(-)} ) + \frac{5\epsilon}{6}  \cost(X,C,S),
\end{align*}
as claimed.
\end{proof}

We can now combine the results of this subsection to show property \labelcref{def:property_bprime}.

\begin{lemma} \label{le:coreset_property_b_line_resolution}
Let $\Delta = 2^{\tau^*} \error(\tau^*)$. Then
\begin{equation*}
        \cost\bigl(\coresetX,\coresetC,S\bigr) + \Delta \le (1+\epsilon)  \cost\bigl(X,C,S\bigr)
\end{equation*}
\end{lemma}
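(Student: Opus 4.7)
The plan is to split both sides along the partition $[\tilde{n}] = \tilde{I}^{(+)} \cup \tilde{I}^{(-)}$ of good and bad indices and then combine the exact identity available for good indices with the controlled inequality available for bad indices.

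First, writing $\cost(\coresetX,\coresetC,S) = \tilde{\gamma}(\tilde{I}^{(+)}) + \tilde{\gamma}(\tilde{I}^{(-)})$ and $\cost(X,C,S) = \gamma(\tilde{I}^{(+)}) + \gamma(\tilde{I}^{(-)})$, I rearrange \cref{rem:good_indices_equality} to the identity
\begin{equation*}
\tilde{\gamma}(\tilde{I}^{(+)}) = \gamma(\tilde{I}^{(+)}) - |\tilde{I}^{(+)}| \cdot \error(\tau^*),
\end{equation*}
which handles the good-index contribution exactly. For the bad indices I invoke \cref{le:inequality_bad_indices} to obtain
\begin{equation*}
\tilde{\gamma}(\tilde{I}^{(-)}) \le \gamma(\tilde{I}^{(-)}) + \tfrac{5\epsilon}{6}\cost(X,C,S).
\end{equation*}

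Adding these two relations yields
\begin{equation*}
\cost(\coresetX,\coresetC,S) \le \cost(X,C,S) - |\tilde{I}^{(+)}|\cdot \error(\tau^*) + \tfrac{5\epsilon}{6}\cost(X,C,S).
\end{equation*}
The crucial observation is now the bookkeeping identity $|\tilde{I}^{(+)}| + |\tilde{I}^{(-)}| = \tilde{n} = 2^{\tau^*}$, which together with \cref{rem:total-dev} gives
\begin{equation*}
\Delta - |\tilde{I}^{(+)}|\cdot \error(\tau^*) = 2^{\tau^*}\error(\tau^*) - |\tilde{I}^{(+)}|\cdot\error(\tau^*) = |\tilde{I}^{(-)}|\cdot\error(\tau^*) = \dev(\tilde{I}^{(-)}).
\end{equation*}
Adding $\Delta$ to both sides of the previous display therefore produces
\begin{equation*}
\cost(\coresetX,\coresetC,S) + \Delta \le \cost(X,C,S) + \dev(\tilde{I}^{(-)}) + \tfrac{5\epsilon}{6}\cost(X,C,S).
\end{equation*}

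Finally, I apply the total-deviation bound from \cref{le:bound_bad_indices}, namely $\dev(\tilde{I}^{(-)}) < \tfrac{\epsilon}{6}\cost(X,C,S)$, to absorb the remaining offset and conclude
\begin{equation*}
\cost(\coresetX,\coresetC,S) + \Delta \le \bigl(1 + \tfrac{\epsilon}{6} + \tfrac{5\epsilon}{6}\bigr)\cost(X,C,S) = (1+\epsilon)\cost(X,C,S).
\end{equation*}
The one spot that requires care (and which is the reason for choosing the specific value of $\tau^*$) is matching $\Delta = 2^{\tau^*}\error(\tau^*)$ against $|\tilde{I}^{(+)}|\error(\tau^*)$; the excess is exactly $\dev(\tilde{I}^{(-)})$, so no further bookkeeping is needed beyond what is packaged in \cref{le:bound_bad_indices}.
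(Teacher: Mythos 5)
Your proposal is correct and follows essentially the same route as the paper's proof: both decompose the costs over good and bad indices, use \cref{rem:good_indices_equality} to handle the good indices exactly, absorb the offset $\Delta=2^{\tau^*}\error(\tau^*)=|\tilde{I}^{(+)}|\error(\tau^*)+|\tilde{I}^{(-)}|\error(\tau^*)$ via \cref{rem:total-dev}, and then apply \cref{le:inequality_bad_indices} and \cref{le:bound_bad_indices} to bound the bad-index contributions by $\tfrac{5\epsilon}{6}\cost(X,C,S)$ and $\tfrac{\epsilon}{6}\cost(X,C,S)$, respectively. The only difference is cosmetic ordering of the algebra.
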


\begin{proof}
First, note that $2^{\tau^*}=|\tilde{I}^{(+)} |+|\tilde{I}^{(-)}|$. Using, in this order, \cref{rem:good_indices_equality,rem:total-dev}, and \cref{le:inequality_bad_indices,le:bound_bad_indices} we, hence, obtain
\begin{align*}
 \cost(\coresetX,\coresetC,S) & + \Delta
 = \tilde{\gamma}(\tilde{I}^{(+)}) +  |\tilde{I}^{(+)} | \cdot \error(\tau^*) + \tilde{\gamma}(\tilde{I}^{(-)}) + |\tilde{I}^{(-)} | \cdot \error(\tau^*)\\
& = \gamma(\tilde{I}^{(+)})+ \tilde{\gamma}(\tilde{I}^{(-)}) + \dev(\tilde{I}^{(-)}) \\
& \le  \gamma(\tilde{I}^{(+)})+ \gamma(\tilde{I}^{(-)})+ \frac{5\epsilon}{6}  \cost(X,C,S) + \frac{\epsilon}{6}  \cost(X,C,S)\\
&= (1+\epsilon) \cost(X,C,S),
\end{align*}
as claimed.
\end{proof}

Note that $\Delta$ depends only on the resolutions $\rho$ and $\tau$ but is independent of $k$ and $\Kappa$.

\subsubsection*{Resolution coresets in $[0,1]$}

The main result of this sections follows now easily.

\begin{theorem}\label{thm:one_dimensional_main_theorem}
$\bigl(X(\tau^*),\Omega (\tau^*)\bigr)$ is an $\epsilon$-coreset for $\bigl(k,X(\rho),\Omega(\rho),\Kappa\bigr)$ of size
	\begin{equation*}
	    |X(\tau^*)| \le 2^{\nicefrac{8}{3}}\frac{k}{\epsilon^{\nicefrac{2}{3}}}.
	\end{equation*}
\end{theorem}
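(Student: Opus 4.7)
The plan is to assemble the theorem directly from the two coreset-property lemmas just established, plus a one-line size computation, taking $k^{*}=k$ and $\tau=\tau^{*}=\lceil \log(2^{5/3}k/\epsilon^{2/3})\rceil$ in the notation of the preceding subsection.

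First I would verify coreset property \labelcref{def:property_a}. Lemma~\ref{le:coreset_property_a_line_resolution} with $\tau=\tau^{*}$ gives the equality $\cost(X,g(\tilde C),S)=\cost(\tilde X,\tilde C,S)+\Delta$ with $\Delta=2^{\tau^{*}}\error(\tau^{*})$; multiplying both sides by $1-\epsilon\le 1$ on the left yields \labelcref{def:property_a} with $\Delta^{+}=\Delta$. Next I would handle \labelcref{def:property_b}: Lemma~\ref{le:coreset_property_b_line_resolution} proves the strengthened inequality \labelcref{def:property_bprime} with the same constant $\Delta$ for every integer clustering $C$ that admits a strongly compatible power diagram, and Remark~\ref{rem:reduction} upgrades this to \labelcref{def:property_b} with $\Delta^{-}=\Delta$. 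Because $\Delta^{+}=\Delta^{-}=\Delta=2^{\tau^{*}}\error(\tau^{*})\ge 0$, the pair $(X(\tau^{*}),\Omega(\tau^{*}))$ is an $\epsilon$-coreset (so one can take $\delta=1$), with the standard extension $g$ from \cref{sec:resolution_coreset_prelimiaries} playing the role of the extension in the coreset definition.

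It remains to bound the size. Since $d=1$, $|X(\tau^{*})|=2^{\tau^{*}}$, and by the definition of $\tau^{*}$ with $k^{*}=k$,
\begin{equation*}
2^{\tau^{*}} \;=\; 2^{\lceil \log(2^{5/3}k/\epsilon^{2/3})\rceil}
\;\le\; 2\cdot \frac{2^{5/3}k}{\epsilon^{2/3}}
\;=\; 2^{8/3}\,\frac{k}{\epsilon^{2/3}},
\end{equation*}
which is the claimed bound.

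The proof is essentially an assembly step: all technical work (the exact equality for good batches, the KKT argument behind the unconstrained lower bound, and the Cauchy--Schwarz split of the bad-batch contribution) was carried out in the earlier lemmas. The only point that requires a moment of care is the fact that the \emph{same} constant $\Delta=2^{\tau^{*}}\error(\tau^{*})$ appears in both \cref{le:coreset_property_a_line_resolution} and \cref{le:coreset_property_b_line_resolution}, which is what lets us conclude an $\epsilon$-coreset rather than only an $(\epsilon,\delta)$-coreset; this is immediate from the definitions of $\Delta$ used in the two lemmas and does not require any additional estimate.
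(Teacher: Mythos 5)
Your proposal is correct and follows essentially the same route as the paper: the paper's proof likewise just cites \cref{le:coreset_property_a_line_resolution,le:coreset_property_b_line_resolution} (with \cref{rem:reduction} implicitly upgrading \labelcref{def:property_bprime} to \labelcref{def:property_b}) and then performs the identical size computation $2^{\tau^*}\le 2\cdot 2^{\nicefrac{5}{3}}k\epsilon^{-\nicefrac{2}{3}}=2^{\nicefrac{8}{3}}k\epsilon^{-\nicefrac{2}{3}}$. Your added remark that the two lemmas supply the \emph{same} offset $\Delta=2^{\tau^*}\error(\tau^*)$, which is exactly what permits $\delta=1$, is a useful explicit observation that the paper leaves tacit.
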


\begin{proof}
By \cref{le:coreset_property_a_line_resolution,le:coreset_property_b_line_resolution}, $\bigl(X(\tau^*),\Omega (\tau^*)\bigr)$ is, indeed, an $\epsilon$-coreset and, by the choice of $\tau^*$, we have
	\begin{equation*}
	    |X(\tau^*)| = 2^{\tau^*} = 2^{\left\lceil \log\left(\frac{2k}{\epsilon^{\nicefrac{2}{3}}}\right) \right\rceil} \le 2\cdot \frac{2^{\nicefrac{5}{3}}k}{\epsilon^{\nicefrac{2}{3}}}= \frac{2^{\nicefrac{8}{3}}k}{\epsilon^{\nicefrac{2}{3}}}.
	\end{equation*}
This completes the proof of the theorem.
\end{proof}

\section{Proof of \cref{th:resolution-coreset}: The general case}\label{sec:proof-d}

We will now prove our main theorem by reducing it to the $1$-dimensional case settled already in \cref{sec:proof1D}. So suppose that $d\ge 2$. The key observation is that, in the isotropic case, the costs of a given clustering $C=(\xi_{ij})_{i\in [k],j\in [n]}\in \Cs_\Kappa(k, X,\Omega)$ on $X=X(\bm \rho)$ in $\R^d$ is a separable function with respect to the lines parallel to the coordinate axes. 

In fact, for $t\in [d]$, let $u_t$ denote the $t$th standard unit vector, and set
\begin{equation*}
   X^{(-t)}(\bm \rho) = X(\rho_1) \times \dots \times X(\rho_{t-1}) \times \{0\} \times X(\rho_{t+1}) \times \dots \times X(\rho_d).
\end{equation*}
Note that
\begin{equation*}
   X(\bm \rho) = \left(\frac{-1}{2^{\rho_t+1}}+\bigl[2^{\rho_t}\bigr] \cdot u_t\right)+ X^{(-t)}(\bm \rho),
\end{equation*}
i.e., $X^{(-t)}(\bm \rho)$ is the corresponding $(d-1)$-dimensional resolution set located in the coordinate hyperplane perpendicular to $u_t$.
Further,
\begin{equation*}
  \mathcal{L}^{(t)}=X^{(-t)}(\bm \rho) + \R\cdot u_t,
\end{equation*}
denote the set of lines through the Cartesian point set and parallel to the $t$th coordinate axis, and set $\mathcal{L}=\bigcup_{t\in [d]} \mathcal{L}^{(t)}$. Then we have
\begin{align*}
	\cost(X,C,\apdsites) 
	& = \sum_{t=1}^{d}\left( \sum_{i=1}^{k}\sum_{j=1}^{n} \xi_{ij} \omega_j \Bigl((x_{j})_{t}-(s_{i})_{t}\Bigr)^{2}\right)\\
	&= \sum_{t=1}^{d}\sum_{i=1}^{k}\sum_{L \in \mathcal{L}^{(t)}}\sum_{x_j\in L} \xi_{ij} \omega_j \Bigl((x_{j})_{t}-(s_{i})_{t}\Bigr)^{2}\\
	&= \sum_{t=1}^{d}\sum_{L \in \mathcal{L}^{(t)}} \left(\sum_{i=1}^{k}\sum_{x_j\in L} \nu(\bm \rho) \xi_{ij} \Bigl((x_{j})_{t}-(s_{i})_{t}\Bigr)^{2}\right).
\end{align*}

\subsubsection*{Clusterings induced on the lines in $\mathcal{L}$}

The double sum in parenthesis can be interpreted as the cost of a certain clustering of the points of $X(\bm \rho)\cap L$ on the line $L$. The reduction to the $1$-dimensional case is, however, somewhat technical and requires some additional notation. 

Before we introduce the details let us begin with some remarks concerning our notational conventions. Obviously, we need to distinguish between the $1$- and $d$-dimensional objects and also bear in mind, that in the $1$-dimensional setting points carry different weights than in the $d$-dimensional situation. In addition, we need to consider sets in the $(d-1)$-dimensional coordinate hyperplanes. In particular, batches of points in $X^{(-t)}(\bm \rho)$ are relevant. In order to keep the notation as intelligible as possible, we will use subscripts for components (as in ${\bm \rho}=(\rho_1,\ldots,\rho_{d})$) but also for \enquote{intrinsically $1$-dimensional} objects. For instance, we write $X_L$ for the point set $X(\bm \rho)\cap L$. Naturally, $X_L$ can be identified with the resolution set $X(\rho_t)$ in $\R$. Note, however, that each point in $X(\rho_t)$ carries the weight $\nu(\rho_t)$ (rather than $\nu(\bm \rho)$ as it is the case for subsets of $X(\bm \rho)$). Further, we use superscripts (in parenthesis) to indicate the dependence of sets in $\R^d$ on a parameter, as e.g. in $\mathcal{L}^{(t)}$. We indicate that a single component is missing by means of a minus sign, and write, for instances, ${\bm \rho}^{(-t)}=(\rho_1,\ldots,\rho_{t-1},\rho_{t+1}\ldots,\rho_{d})$. The same notation is also used if a component is fixed. In $X^{(-t)}(\bm \rho)$, for example, the minus sign indicates that the $t$th coordinate of each point in $X(\bm \rho)$ is set to $0$. In other situations, again indicated by the superscript $^{(-t)}$, the $t$th coordinate may, however, be fixed differently. While these conventions are used to facilitate a more intuitive exposition, all objects will, of course, be defined precisely. 

Let us now introduce the needed concepts more precisely. We begin with the formal definition of clusterings on the lines $L\in \mathcal{L}^{(t)}$ \emph{induced} by $C$. Let 
\begin{equation*}
    N_L= \bigl\{j\in [n]: x_j\in L\bigr\}, \qquad I_{C,L}= \bigl\{i\in [k]: \exists j\in N_L: \xi_{ij}\ne 0 \bigr\}, \qquad k_L=\bigl|I_{C,L}\bigr|,
\end{equation*}
and set
\begin{equation*}
\begin{array}{cllrl}
     (\kappa_i)_L&= \sum_{j\in N_L} \nu(\rho_t) \xi_{ij} \quad & \text{for $i\in I_{C,L}$}, \qquad &
     \Kappa_L &= \bigl\{(\kappa_i)_L: i\in I_{C,L}\bigr\},\\[.1cm]
     (\omega_j)_L&= \nu(\rho_t) \quad & \text{for $j\in N_L$}, \qquad & \Omega_L &= \bigl\{(\omega_j)_L: j\in N_L\bigr\}.
\end{array}
\end{equation*}
Then, of course, 
\begin{equation*}
    C_L= \bigl((C_i)_L : i\in I_{C,L}\bigr)= \bigl( \xi_{ij} : i\in I_{C,L}, j\in N_L\bigr) \in \Cs_{\Kappa_L}(k_L, X_L,\Omega_L).
\end{equation*}
Note that 
\begin{equation*}
    \sum_{i\in I_{C,L}} (\kappa_i)_{L} = \sum_{i\in I_{C,L}} \sum_{j\in N_L} \xi_{ij} (\omega_j)_L = \sum_{x_j\in X_L} \nu(\rho_t) = \nu(\rho_t) \cdot |X_L| = 1.
\end{equation*}
Further, let $(s_i)_L$ denote the orthogonal projection of $s_i$ on $L$, and set
\begin{equation*}
    S_L=\bigl\{(s_i)_L: i\in I_{C,L}\bigr\}.
\end{equation*} 
Let us point out that $(s_i)_L=(s_\ell)_L$ whenever $(s_{i})_{t}=(s_{\ell})_{t}$, i.e. $S_L$ may, in general, be a multiset. In such a situation it is straightforward to merge the corresponding cells of $C_L$, perform subsequent operations in the standard model and, finally, split-up the assignment again. We prefer, however, not to further complicate the notation and simply apply this additional technicality tacitly. Note that, when $C$ is integer and admits a strongly compatible power diagram, the sites $(s_i)_L$ for $i\in I_{C,L}$ are all different anyway.

With the introduced notation we see that
\begin{align*}
\sum_{i=1}^{k}&\sum_{x_j\in L} \nu(\bm \rho) \xi_{ij} \Bigl((x_{j})_{t}-(s_{i})_{t}\Bigr)^{2}
=\sum_{i\in S_{C,L}} \sum_{j\in N_L} \nu(\bm \rho) \xi_{ij} \bigl\|x_{j}-(s_{i})_{L}\bigr\|^{2} \\
& =\nu({\bm \rho}^{(-t)}) \sum_{i\in S_{C,L}} \sum_{j\in N_L} \nu(\rho_t) \xi_{ij} \bigl\|x_{j}-(s_{i})_{L}\bigr\|^{2} = \nu({\bm \rho}^{(-t)}) \cost(X_L,C_L,S_L) .
\end{align*}
and hence have the following result.

\begin{remark} \label{rem:decoupling_cost_into_lines}
    \begin{equation*}
    \cost(X,C,\apdsites) =  \sum_{t=1}^{d}\sum_{L \in \mathcal{L}^{(t)}} \nu({\bm \rho}^{(-t)}) \cost\bigl(X_L,C_L,S_L\bigr).
   \end{equation*}
\end{remark}

\cref{rem:decoupling_cost_into_lines} gives the desired split of the costs of $C$ into the costs of the induced clusterings $C_L$ for $L \in \mathcal{L}$ and opens up the possibility to apply results from \cref{sec:proof1D}.

\subsubsection*{Resolution coresets}

As before we will use the simplified notation $X=X(\bm \rho)$, $\coresetX=X(\bm \tau)$, and generally signify objects related to $\coresetX$ by means of $\tilde{\quad}$ put on top. In particular, we set
\begin{equation*}
    \tilde{\mathcal{L}}^{(t)}=\coresetX^{(-t)}+\R\cdot u_t \quad \text{ for $t\in [d]$}, \qquad \tilde{\mathcal{L}}=\bigcup_{t\in [d]} \tilde{\mathcal{L}}^{(t)},
\end{equation*}
and let $\mathcal{B}^{(-t)}$ denote the set of batches in $X^{(-t)}$ induced by $\tilde{X}^{(-t)}$ by. Further, for $B\in \mathcal{B}^{(-t)}$, let $\mathcal{L}^{(t)}(B)= B+\R\cdot u_t$. For future reference, the following remark collects the cardinalities of the different sets.

\begin{remark} \label{rem:cardinalities}
We have $\nu\bigl({\bm\tau}^{(-t)}\bigr)\cdot\bigl|\mathcal{B}^{(-t)} \bigr| = 1$
and, for $B\in \mathcal{B}^{(-t)}$,
\begin{equation*}
 \qquad |B| = \frac{\nu\bigl({\bm\tau}^{(-t)}\bigr)}{\nu\bigl({\bm\rho}^{(-t)}\bigr)} = \prod_{\ell\in [d]\setminus \{t\}} 2^{\rho_\ell-\tau_\ell} = \bigl|\mathcal{L}^{(t)}(B) \bigr|, \qquad
 \bigl|\mathcal{L}^{(t)} \bigr|  =  \bigl|\mathcal{L}^{(t)}(B)\bigr| \cdot \bigl|\tilde{\mathcal{L}}^{(t)} \bigr|
\end{equation*}
\end{remark}

In fact, the lines of $\tilde{\mathcal{L}}^{(t)}$ are in $1$-to-$1$ correspondence with the batches in $\mathcal{B}^{(-t)}$ as each line $\tilde{L} \in \tilde{\mathcal{L}}^{(t)}$ intersects exactly one such batch. In the following, we use the notation $\tilde{L}=\tilde{L}(B)$ and $B=B(\tilde{L})$ to indicate this correspondence. Note that the point $\tilde{L}\cap X^{(-t)}\in B$ is the centroid of $B$. Also, recall from \cref{sec:resolution_coreset_prelimiaries} that the merging function $p:X\rightarrow \coresetX$ has a Cartesian structure, too. More precisely, for $x\in X$, we have
\begin{equation*}
    p(x)= \Bigl(p_1\bigl((x)_1\bigr), \ldots, p_d\bigl((x)_d\bigr)\Bigr).
\end{equation*}
Hence merging takes place in each coordinate independently. 

In the following, we will use the specific values
\begin{equation*}
\tau^*_1=\ldots =\tau^*_d=\tau^* = \left\lceil \log\left(\frac{2^{\nicefrac{5}{3}}k^*}{\epsilon^{\nicefrac{2}{3}}}\right) \right\rceil, \qquad
\bm \tau = \bm \tau^* = (\tau^*_1,\ldots,\tau^*_d), \qquad \coresetX=X(\bm \tau^*),
\end{equation*}
and define
\begin{equation*}
\Delta_t =2^{\tau_t^*} \error(\tau_t^*) \quad (t\in [d]), \qquad
    \Delta = \sum_{t=1}^{d}\sum_{L \in \mathcal{L}^{(t)}} \nu(\bm \rho^{(-t)}) \Delta_t.
\end{equation*}
Finally we set  $k^*=k$ and note that $k_L\le k^*$ for each $L \in \mathcal{L}$. Since 
\begin{equation*}
        |\coresetX| = |X(\bm \tau^*)|^d  \le \left(\frac{2^{\nicefrac{8}{3}}k^*}{\epsilon^{\nicefrac{2}{3}}}\right)^d,
    \end{equation*}
\cref{th:resolution-coreset} is proved once the two coreset properties
are established. This will be done in the next two subsections, more specifically in \cref{le:coreset_property_a_d_dimensions,le:coreset_property_b_d_dimensions}. To (slightly) simplify the notation we will omit the asterisk and simply write $\bm \tau=(\tau_1,\ldots,\tau_d)$ for the resolution of $\coresetX$.

\subsubsection*{Coreset property \cref{def:property_a}}

Let $\coresetC=(\tilde{\xi}_{iq})_{i\in [k],q\in [\tilde{n}]}\in \Cs_\Kappa(k, \coresetX,\coresetOmega)$ be a clustering on $\coresetX$. Recall from \cref{sec:resolution_coreset_prelimiaries} that $\coresetC$ can be extended to a clustering $C=g(\coresetC)=(\xi_{ij})_{i\in [k],q\in [n]}\in \Cs_\Kappa(k, X,\Omega)$ by means of the extension $\extension: \Cs_\Kappa(k,\coresetX,\coresetOmega) \rightarrow \Cs_\Kappa(k,X,\Omega)$
defined by 
\begin{equation*}
\xi_{ij} = \tilde{\xi}_{iq} \qquad \text{for $j \in [n]$, $i\in [k]$, $q \in [\tilde{n}]$ with $p(x_j)=\tilde{x}_q$}.
\end{equation*}

In order to apply the established $1$-dimensional results we will now take a closer look at the relation between extensions in $\R^d$ and on the lines. Of course,  \cref{rem:decoupling_cost_into_lines} can be used to express $\cost(\coresetX,\coresetC,\apdsites)$ in terms of the costs of the induced clusterings on the lines $\tilde{L}\in \tilde{\mathcal{L}}$. Explicitly, it reads as follows.

\begin{remark} \label{rem:decoupling-coreset}
    \begin{equation*}
    \cost(\coresetX,\coresetC,\apdsites) =  \sum_{t=1}^{d}\sum_{\tilde{L} \in \tilde{\mathcal{L}}^{(t)}} \nu({\bm \tau}^{(-t)}) \cost\bigl(\coresetX_{\tilde{L}},\coresetC_{\tilde{L}},S_L\bigr).
   \end{equation*}
\end{remark}
Hence, using the coreset properties on the involved lines alone we can bound $\cost\bigl(X_L,C_L,S_L\bigr)$ only for $L\in \tilde{\mathcal{L}}$. Indeed, the underlying point set is $X\cap \bigcup_{\tilde{L}\in \tilde{\mathcal{L}}} {\tilde L}$ but does not contain any point from $X\setminus \bigcup_{\tilde{L}\in \tilde{\mathcal{L}}} \tilde{L}$. 
Due to the definition of the extension, $\cost\bigl(X_L,C_L,S_L\bigr)$ can, however, be included for all lines in $\mathcal{L}\setminus \tilde{\mathcal{L}}$ as well. In fact, except of a translation of the point set, the clusterings $C_{L_1}$ and $C_{L_2}$ for lines $L_1,L_2 \in \mathcal{L}^{(t)}$ coincide whenever there is a batch $B\in \mathcal{B}^{(-t)}$ such that $L_1,L_2 \in \mathcal{L}^{(t)}(B)$. Hence, in particular, we have the following remark.

\begin{remark} \label{rem:reverse_clustering_is_identical}
Let $\tilde{L}\in \tilde{\mathcal{L}}^{(t)}$, $B=B(\tilde{L})\in {\mathcal{B}}^{(t)}$, and $L\in {\mathcal{L}}^{(t)}(B)$. Then 
\begin{equation*}
        \cost (X_{L},C_{L},S_{L}) = 
        \cost (X_{\tilde{L}},C_{\tilde{L}},S_{\tilde{L}}) 
\end{equation*}
\end{remark}

The following lemma addresses coreset property \cref{def:property_a}. 

\begin{lemma}\label{le:coreset_property_a_d_dimensions}
Let $\coresetC\in \Cs_\Kappa(k, \coresetX,\coresetOmega)$ and $C=g(\coresetC)$. Then
\begin{equation*}
        \cost\bigl(X,C,S\bigr) = \cost\bigl(\coresetX,\coresetC,S\bigr) + \Delta.
\end{equation*}
\end{lemma}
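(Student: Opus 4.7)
The plan is to reduce the $d$-dimensional identity to the 1-dimensional result already established in \cref{le:coreset_property_a_line_resolution}, exploiting the separability of the isotropic cost along coordinate axes. First I would apply \cref{rem:decoupling_cost_into_lines} to write
\[
\cost(X,C,S) = \sum_{t=1}^d \sum_{L\in\mathcal{L}^{(t)}} \nu(\bm\rho^{(-t)})\,\cost(X_L,C_L,S_L),
\]
and for each fixed $t$ regroup the lines $L \in \mathcal{L}^{(t)}$ according to the batch $B \in \mathcal{B}^{(-t)}$ of transverse coordinates they lie over, so that the decomposition $\mathcal{L}^{(t)} = \bigcup_{B \in \mathcal{B}^{(-t)}} \mathcal{L}^{(t)}(B)$ splits the inner sum into contributions $\sum_{L\in \mathcal{L}^{(t)}(B)} \cost(X_L,C_L,S_L)$ for each $B$.

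The next step is to collapse each such inner contribution onto the representative line $\tilde L = \tilde L(B) \in \tilde{\mathcal{L}}^{(t)}$. Since $C = g(\coresetC)$ and the merging function $p$ acts coordinatewise, \cref{rem:reverse_clustering_is_identical} gives $\cost(X_L,C_L,S_L) = \cost(X_{\tilde L},C_{\tilde L},S_{\tilde L})$ for every $L \in \mathcal{L}^{(t)}(B)$, while \cref{rem:cardinalities} furnishes the weight identity $\nu(\bm\rho^{(-t)})\cdot|\mathcal{L}^{(t)}(B)| = \nu(\bm\tau^{(-t)})$. After this rewriting the $d$-dimensional cost takes the form $\sum_{t=1}^d \sum_{\tilde L \in \tilde{\mathcal{L}}^{(t)}} \nu(\bm\tau^{(-t)})\,\cost(X_{\tilde L},C_{\tilde L},S_{\tilde L})$. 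I would then apply the 1-dimensional \cref{le:coreset_property_a_line_resolution} on each such line, using the natural identification of $X_{\tilde L}$ and $\coresetX_{\tilde L}$ with the 1D resolution sets at levels $\rho_t$ and $\tau_t$ and the 1D merging induced by the $t$-th coordinate projection of $p$, to turn each summand into $\cost(\coresetX_{\tilde L},\coresetC_{\tilde L},S_{\tilde L}) + \Delta_t$.

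It then only remains to recognize the first part of the resulting double sum as $\cost(\coresetX,\coresetC,S)$ via \cref{rem:decoupling-coreset}, and to match the residual offset $\sum_{t=1}^d \sum_{\tilde L \in \tilde{\mathcal{L}}^{(t)}} \nu(\bm\tau^{(-t)})\,\Delta_t$ with the defining expression of $\Delta$. By \cref{rem:cardinalities} both $|\tilde{\mathcal{L}}^{(t)}|\cdot\nu(\bm\tau^{(-t)})$ and $|\mathcal{L}^{(t)}|\cdot\nu(\bm\rho^{(-t)})$ equal $1$, so the residual and the defining sum both collapse to $\sum_{t=1}^d \Delta_t$, and the two offsets agree. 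The main (and really only) subtlety I would need to verify is that the restriction of the $d$-dimensional extension $g(\coresetC)$ to a coreset line $\tilde L$ is literally the 1D $g$-image of $\coresetC_{\tilde L}$ at the resolution pair $(\rho_t,\tau_t)$; this follows cleanly from the Cartesian product form of $p$, but requires careful matching of the coordinate-wise merging with the induced-clustering notation.
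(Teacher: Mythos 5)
Your proposal is correct and is essentially the paper's own proof: the same decomposition via \cref{rem:decoupling_cost_into_lines} and \cref{rem:decoupling-coreset}, the same grouping of lines into batches with \cref{rem:cardinalities} and \cref{rem:reverse_clustering_is_identical}, and the same per-line application of \cref{le:coreset_property_a_line_resolution}, merely traversed from $\cost(X,C,S)$ toward $\cost(\coresetX,\coresetC,S)+\Delta$ instead of the reverse. The subtlety you flag at the end (that the restriction of the $d$-dimensional extension to a line $\tilde L$ is the one-dimensional extension of $\coresetC_{\tilde L}$) is indeed the one point the paper also relies on, there absorbed into the coordinatewise form of $p$.
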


\begin{proof}
By \cref{rem:cardinalities} we have 
$\nu\bigl({\bm\rho}^{(-t)}) \cdot |\mathcal{L}^{(t)}| = \nu\bigl({\bm\tau}^{(-t)}\bigr) \cdot |\tilde{\mathcal{L}}^{(t)}|$, for each $t\in [d]$. Hence,
\begin{equation*}
    \Delta = \sum_{t=1}^{d} \sum_{L \in \mathcal{L}^{(t)}} \nu(\bm \rho^{(-t)}) \Delta_t
    = \sum_{t=1}^{d} \sum_{\tilde{L} \in \tilde{\mathcal{L}}^{(t)}} \nu({\bm \tau}^{(-t)}) \Delta_t.
\end{equation*}
Thus with \cref{rem:decoupling-coreset,rem:cardinalities} we obtain
\begin{align*}
    \cost &\bigl(\coresetX,\coresetC,S\bigr) + \Delta = \sum_{t=1}^{d}\sum_{\tilde{L} \in \tilde{\mathcal{L}}^{(t)}} \nu(\bm \tau^{(-t)}) \left( \cost\bigl(\coresetX_{\tilde{L}},\coresetC_{\tilde{L}},S_{\tilde{L}}\bigr) + \Delta_t \right) \\
    &= \sum_{t=1}^{d}\sum_{\tilde{L} \in \tilde{\mathcal{L}}^{(t)}} \frac{1}{|B(\tilde{L})|} \sum_{L \in {\mathcal L}^{(t)}(B(\tilde{L}))} \nu(\bm \tau^{-t}) \left( \cost\bigl(\coresetX_{\tilde{L}},\coresetC_{\tilde{L}},S_{\tilde{L}}\bigr)+ \Delta_t\right)\\
     &= \sum_{t=1}^{d}\sum_{\tilde{L} \in \tilde{\mathcal{L}}^{(t)}} \sum_{L \in {\mathcal L}^{(t)}(B(\tilde{L}))} \nu(\bm \rho^{-t}) \left( \cost\bigl(\coresetX_{\tilde{L}},\coresetC_{\tilde{L}},S_{\tilde{L}}\bigr)+ \Delta_t\right)
\end{align*}
Now, with ${\tilde{L}}\in \tilde{\mathcal{L}}^{(t)}$, $B=B(\tilde{L})\in {\mathcal{B}}^{(t)}$, and $L\in {\mathcal{L}}^{(t)}(B)$, we apply \cref{le:coreset_property_a_line_resolution} to the clustering $\coresetC_{\tilde{L}}\in\Cs_{\Kappa_{\tilde{L}}}(k_{\tilde{L}},\coresetX_{\tilde{L}},\coresetOmega_{\tilde{L}})$. With the aid of \cref{rem:reverse_clustering_is_identical}, obtain
\begin{equation*}
        \cost (X_L,C_L,S_L)=
        \cost (X_{\tilde{L}},C_{\tilde{L}},S_{\tilde{L}}) = 
        \cost (\coresetX_{\tilde{L}},\coresetC_{\tilde{L}}, S_{\tilde{L}}) + \Delta_t.
\end{equation*}
Hence, with \cref{rem:decoupling_cost_into_lines}, we can now reverse the previous arguments to see that
\begin{align*}
\sum_{t=1}^{d} & \sum_{\tilde{L} \in \tilde{\mathcal{L}}^{(t)}(\bm \tau)} \sum_{L \in {\mathcal L}^{(t)}(B(\tilde{L}))} \nu(\bm \rho^{-t}) \left( \cost\bigl(\coresetX_{\tilde{L}},\coresetC_{\tilde{L}},S_{\tilde{L}}\bigr)+ \Delta_t\right)\\
&= \sum_{t=1}^{d}\sum_{\tilde{L} \in \tilde{\mathcal{L}}^{(t)}} \sum_{L \in {\mathcal L}^{(t)}(B(\tilde{L})} \nu(\bm \rho^{-t}) \cost (X_L,C_L,S_L)\\
&= \sum_{t=1}^{d}\sum_{L \in \mathcal{L}^{(t)}} \cost (X_L,C_L,S_L) = \cost (X,C,S).
\end{align*}
This completes the proof.
\end{proof}

\subsubsection*{Coreset property \cref{def:property_bprime}}

Let us now turn to coreset property \cref{def:property_bprime}.
So, suppose that the clustering $C\in \Cs_{\Kappa}(k,X,\Omega)$ is integer and admits a strongly compatible power diagram. Then, for each line $L\in \mathcal{L}$ each clustering
\begin{equation*}
    C_L= \bigl( \xi_{ij} : i\in I_{C,L}, j\in N_L\bigr) \in \Cs_{\Kappa_L}(k_L, X_L,\Omega_L)
\end{equation*}
is also integer and admits a strongly compatible power diagram. We can hence apply \cref{le:coreset_property_b_line_resolution} to relate its cost to that of the corresponding coreset clustering. Note that the latter lives on $L$ while $\coresetC_{\tilde{L}}$ is only defined on the lines $\tilde{L}\in \tilde{\mathcal{L}}$. The following proof of coreset property \cref{def:property_bprime} will therefore explicitly address the transition from coresets on each line $L\in \mathcal{L}$ to $\coresetC$.

\begin{lemma}\label{le:coreset_property_b_d_dimensions}
Let $C\in \Cs_{\Kappa}(k,X,\Omega)$ be integer and admit a strongly compatible power diagram, and set $\coresetC=p(C)$. Then
\begin{equation*}
        \cost\bigl(\coresetX,\coresetC,S\bigr) + \Delta \le (1+\epsilon)  \cost\bigl(X,C,S\bigr).
\end{equation*}
\end{lemma}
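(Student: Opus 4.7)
The plan is to reduce the $d$-dimensional inequality to the one-dimensional \cref{le:coreset_property_b_line_resolution}, exploiting the line decompositions of \cref{rem:decoupling_cost_into_lines,rem:decoupling-coreset}. The conceptual obstacle is that $\cost(X,C,S)$ decomposes along the lines in $\mathcal{L}^{(t)}$ whereas $\cost(\coresetX,\coresetC,S)$ decomposes along the sparser set $\tilde{\mathcal{L}}^{(t)}$, and the induced clustering $\coresetC_{\tilde{L}}$ is not simply the one-dimensional merger of a single $C_L$ but mixes the $|B(\tilde{L})|$ lines in $\mathcal{L}^{(t)}(B(\tilde{L}))$ simultaneously.

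First I would establish the following averaging identity for each $t\in[d]$ and $\tilde{L}\in \tilde{\mathcal{L}}^{(t)}$: writing $p_t$ for the one-dimensional merger in the $t$-direction,
\[
\coresetC_{\tilde{L}} \;=\; \frac{1}{|B(\tilde{L})|} \sum_{L\in \mathcal{L}^{(t)}(B(\tilde{L}))} p_t(C_L).
\]
This follows from the Cartesian structure of $p$: starting from $\tilde{\xi}_{i\tilde{q}}=\tfrac{\nu(\bm\rho)}{\nu(\bm\tau)}\sum_{j\in B_{\tilde{q}}}\xi_{ij}$, one decomposes the $d$-dimensional batch $B_{\tilde{q}}$ into its $|B(\tilde{L})|$ one-dimensional slices along the lines $L\in \mathcal{L}^{(t)}(B(\tilde{L}))$ and factors $\tfrac{\nu(\bm\rho)}{\nu(\bm\tau)}=\tfrac{\nu(\rho_t)}{\nu(\tau_t)}\cdot \tfrac{1}{|B(\tilde{L})|}$. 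Because $\cost(\coresetX_{\tilde{L}},\,\cdot\,,S_{\tilde{L}})$ is linear in the clustering variables, the averaging then propagates to
\[
\cost(\coresetX_{\tilde{L}},\coresetC_{\tilde{L}},S_{\tilde{L}}) \;=\; \frac{1}{|B(\tilde{L})|}\sum_{L\in \mathcal{L}^{(t)}(B(\tilde{L}))} \cost\bigl(\coresetX_{\tilde{L}}, p_t(C_L),S_L\bigr).
\]

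Next I would apply the one-dimensional result line by line. For each $L\in \mathcal{L}^{(t)}$, the restriction $C_L$ is integer and admits a strongly compatible $1$-dimensional power diagram, because the cells of the $d$-dimensional strongly compatible PD are convex and hence intersect $L$ in intervals. \Cref{le:coreset_property_b_line_resolution} therefore yields
\[
\cost\bigl(\coresetX_{\tilde{L}}, p_t(C_L), S_L\bigr) + \Delta_t \;\le\; (1+\epsilon)\,\cost(X_L,C_L,S_L)
\]
for every such $L$.

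Finally I would assemble the pieces: sum the $1$-dimensional inequality over $L\in \mathcal{L}^{(t)}(B(\tilde{L}))$, divide by $|B(\tilde{L})|$, multiply by $\nu(\bm\tau^{(-t)})$, and then sum over $\tilde{L}\in \tilde{\mathcal{L}}^{(t)}$ and $t\in[d]$. The cardinality identities of \cref{rem:cardinalities}, in particular $\nu(\bm\tau^{(-t)})=|B(\tilde{L})|\cdot\nu(\bm\rho^{(-t)})$ and $\nu(\bm\rho^{(-t)})\cdot|\mathcal{L}^{(t)}|=\nu(\bm\tau^{(-t)})\cdot|\tilde{\mathcal{L}}^{(t)}|$, make the right-hand side collapse into $(1+\epsilon)\sum_t\sum_{L\in \mathcal{L}^{(t)}} \nu(\bm\rho^{(-t)})\,\cost(X_L,C_L,S_L) = (1+\epsilon)\cost(X,C,S)$ via \cref{rem:decoupling_cost_into_lines}, while the accumulated $\Delta_t$-terms on the left reassemble to exactly $\Delta$. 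I expect the main obstacle to be the clean derivation of the averaging identity together with the careful bookkeeping of the $(d-1)$-dimensional volume factors; once those are handled, \cref{le:coreset_property_b_line_resolution} carries all of the analytic work.
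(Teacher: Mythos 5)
Your proposal is correct and follows essentially the same route as the paper: decompose both costs along axis-parallel lines, apply \cref{le:coreset_property_b_line_resolution} to each induced clustering $C_L$ (which inherits integrality and a strongly compatible power diagram from $C$), and recombine via the averaging identity expressing $\coresetC_{\tilde{L}}$ as the mean of the one-dimensional mergers $p_t(C_L)$ over the lines in $\mathcal{L}^{(t)}(B(\tilde{L}))$, together with the volume bookkeeping of \cref{rem:cardinalities}. The paper merely packages the averaging step differently, introducing explicit intermediate clusterings $Z_L$ on coarsened line sets $Y_L$ and verifying $\tilde{\xi}_{iq}=\frac{\nu(\bm\rho^{(-t)})}{\nu(\bm\tau^{(-t)})}\sum_{L}\zeta^{(L)}_{ir}$ directly, which is exactly your identity combined with linearity of the cost in the clustering variables.
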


\begin{proof} By \cref{rem:decoupling_cost_into_lines}, $\cost(X,C,\apdsites)$ can be decomposed into the costs on the lines $L\in \mathcal{L}$, i.e., $\cost\bigl(X_L,C_L,S_L\bigr)$. For each $C_L\in \Cs_{\Kappa_L}(k_L,X_L,\Omega_L)$ we derive a coreset clustering with $k_l\le k^*$ according to \cref{le:coreset_property_b_line_resolution}. We refrain from denoting it by $\widetilde{C_L}$ (with a wide tilde) but use a different notation for better distinction. Let $t\in [d]$, $L\in \mathcal{L}^{(t)}$ and $x_L^{(-t)}\in X^{(-t)}$ such that $L=x_L^{(-t)}+\R\cdot u_t$. Then we set
\begin{equation*}
\begin{array}{cl}
Y_L& = x_L^{(-t)} + \bigr\{x_t u_t: x_t\in X(\tau_t)\bigr\}, \qquad  \coresetOmega_L=\bigl(\nu(\tau_t),\ldots, \nu(\tau_t)\bigr)\\[.1cm]
Z_L& =(\zeta^{(L)}_{ir})_{i\in I_{C,L}, r \in [2^{\tau_t}]} \in \Cs_{\Kappa_{L}}(k_L,Y_L,\coresetOmega_{L}).
\end{array}
\end{equation*}
Note that 
\begin{equation*}
    Y_L=x_L^{(-t)}+\{y_1,\ldots,y_{2^{\tau_t}}\}\cdot u_t \quad \text{with}
    \quad y_r=2^{-(\tau_t+1)}+ (r-1)2^{-\tau_t} \quad (r\in \bigl[2^{\tau_t}]\bigr).
\end{equation*}
By \cref{rem:decoupling_cost_into_lines} and \cref{le:coreset_property_b_line_resolution} we have
\begin{align*}
     (1+\epsilon) \cost(X,C,\apdsites)  - \Delta & = \sum_{t=1}^{d}\sum_{L \in \mathcal{L}^{(t)}} \nu(\bm \rho^{(-t)}) \left( (1+\epsilon) \cost\bigl(X_L,C_L,S_L\bigr) - \Delta_t \right) \\
     & \ge \sum_{t=1}^{d}\sum_{L \in \mathcal{L}^{(t)}} \nu(\bm \rho^{(-t)}) \cost\bigl(Y_L,Z_L,S_L\bigr) \\
     & = \sum_{t=1}^{d}\sum_{B \in \mathcal{B}^{(-t)}} \sum_{L \in \mathcal{L}^{(t)}(B)} \nu(\bm \rho^{(-t)}) \cost\bigl(Y_L,Z_L,S_L\bigr). 
\end{align*}
Now, let $q\in [\tilde{n}]$, $B=(B_q+ \R\cdot u_t)\cap X^{(-t)}\in \mathcal{B}^{(t)}$, $\tilde{L}=\tilde{L}(B)$, and $r=r(q)\in [2^{\tau_t}]$ such that $(\tilde{x}_q)_t=(y_r)_t$. Then, as $\tilde{x}_q\in \tilde{L}$, we have
\begin{equation*}
    \norm{y_r - (s_i)_L}_2^2 = \norm{\tilde{x}_q - (s_i)_{\tilde{L}}}_2^2 \quad \text{for all $L\in \mathcal{L}^{(t)}(B)$}
\end{equation*}
and 
\begin{align*}
    \tilde{\xi}_{iq} &= \frac{1}{\nu(\bm \tau)} \sum_{x_j\in B_q} \xi_{ij}\nu(\bm \rho) 
    = \frac{\nu(\bm \rho)}{\nu(\bm \tau)} \sum_{L \in \mathcal{L}^{(t)}(B)} \sum_{x_j\in L \cap B_q}\xi_{ij}\\[.1cm]
     & =  \frac{\nu(\bm \rho)}{\nu(\bm \tau)} \sum_{L \in \mathcal{L}^{(t)}(B)} \frac{\nu(\tau_t)}{\nu(\rho_t)} \left(\frac{1}{\nu(\tau_t)} \sum_{x_j\in L \cap B_q}\xi_{ij}\nu(\rho_t)\right)
      = \frac{\nu(\bm \rho^{(-t})}{\nu(\bm \tau^{(-t)})} \sum_{L \in \mathcal{L}^{(t)}(B)} \zeta_{ir}.
\end{align*}
Hence, with the additional setting $\zeta^{(L)}_{ir}=0$ for $i\not\in I_{C,L}$, we obtain 
\begin{align*}
     \sum_{L \in \mathcal{L}^{(t)}(B)} & \nu(\bm \rho^{(-t)}) \cost\bigl(Y_L,Z_L,S_L\bigr)
     =  \sum_{L \in \mathcal{L}^{(t)}(B)} \nu(\bm \rho^{(-t)}) \sum_{i\in I_{C,L}} \sum_{r=1}^{2^{\tau_{t}}} \zeta^{(L)}_{ir} \nu(\tau_t) \norm{y_r - (s_i)_{L}}_2^2 \\
     &= \sum_{i=1}^{k} \sum_{\tilde{x}_q \in \coresetX_{\tilde{L}}} \left( \nu(\bm \rho^{(-t)})  \sum_{L \in \mathcal{L}^{(t)}(B)} \zeta^{(L)}_{ir(q)} \right) \nu(\tau_t) \norm{\tilde{x}_q - (s_i)_{\tilde{L}}}_2^2 \\
     &= \nu(\bm \tau^{(-t)}) \sum_{i=1}^k \sum_{\tilde{x}_q \in \coresetX_{\tilde{L}}}  \tilde{\xi}_{iq} \nu(\tau_t) \norm{\tilde{x}_q - (s_i)_{\tilde{L}}}_2^2 = \nu(\bm \tau^{-t}) \cost(\coresetX_{\tilde{L}},\coresetC_{\tilde{L}},S_{\tilde{L}}).
\end{align*}
Thus,
\begin{align*}
     (1+\epsilon) \cost&(X,C,\apdsites)  - \Delta \ge \sum_{t=1}^{d}\sum_{B \in \mathcal{B}^{(-t)}} \sum_{L \in \mathcal{L}(B)} \nu(\bm \rho^{(-t)}) \cost\bigl(Y_L,Z_L,S_L\bigr)\\
     & = \sum_{t=1}^{d}\sum_{\tilde{L}\in \mathcal{L}^{(-t)}} \nu(\bm \tau^{(-t)}) \cost(\coresetX_{\tilde{L}},\coresetC_{\tilde{L}},S_{\tilde{L}}) = \cost\bigl(\coresetX,\coresetC,S\bigr),
\end{align*}
which concludes the proof.
\end{proof}

\section{Proof of \cref{th:approx-resolution}}\label{sec:approx-resolution-proof}
While \cref{th:resolution-coreset} explicitly exploits the orthogonality of the Euclidean norm and hence produces resolution coresets only for the Euclidean case we will now show that it can still be used for general anisotropic objective functions defined by means of families $\apdmatrices=\{A_1,\ldots,A_k\}$ of positive definite symmetry $(d\times d)$-matrices. The next well-known result specifies how good constrained clusterings on coresets yield good clusterings on the original data sets. In the proof of \cref{th:approx-resolution} we will need it only in the Euclidean case and for $\delta=1$. We formulate it here for $(\epsilon,\delta)$-coresets in the anisotropic case since it provides the rationale behind the general coreset definition given in \cref{sec:preliminaries_resolution_coreset} which involves two $\Delta$-terms.

\begin{proposition}[ {\cite[Thm. 3.5 (a)]{FG22}} ] 
\label{prop:approximation_using_coreset}
    Let $I=(k,X,\Omega,\apdmatrices,\Kappa,S)$ be an instance of \wcaa, $\epsilon \in (0,\nicefrac{1}{2}]$, $\delta \in [1,\infty)$, and $\gamma \ge \delta$. Further, let $(\coresetX,\coresetOmega)$ be an $(\nicefrac{\epsilon}{3},\delta)$-coreset for $I_S$, and $\extension$ its extension. 
    
    Now suppose that $\coresetC \in \Cs_\Kappa(k,\coresetX,\coresetOmega)$ is a $\gamma$-approximation for $\coresetinstance=(k,\coresetX,\coresetOmega,\apdmatrices,\Kappa,S)$, then $\extension(\coresetC)$ is a $(1+\epsilon)\gamma $-approximation for $I$.
\end{proposition}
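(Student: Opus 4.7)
The plan is to chain two ingredients together: the general coreset-to-instance approximation statement of \cref{prop:approximation_using_coreset} specialized to the Euclidean case, and a standard eigenvalue sandwich that compares the Euclidean cost to the anisotropic cost. The first step lets us transfer the coreset-level approximation guarantee to the Euclidean instance $I(\Ecc)$ on the original data $X(\bm \rho)$; the second step transfers that bound from $I(\Ecc)$ to $I(\apdmatrices)$, paying a multiplicative factor of $\lambdamax/\lambdamin$.

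Concretely, I would first observe that an $\nicefrac{\epsilon}{3}$-resolution coreset is, by definition, an $(\nicefrac{\epsilon}{3},1)$-coreset for $I(\Ecc)$, so \cref{prop:approximation_using_coreset} applies with $\delta=1$ and any $\gamma \ge 1$. It yields
\begin{equation*}
\cost(X,C,S) \;\le\; (1+\epsilon)\gamma \cdot \cost(X,C^{*}_{\Ecc},S),
\end{equation*}
where $C^{*}_{\Ecc}$ denotes an optimum of $I(\Ecc)$ and $C=\extension(\coresetC)$.

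Next, I would invoke the eigenvalue sandwich. For any positive definite symmetric $(d\times d)$-matrix $A_i$ with eigenvalues in $[\lambdamin,\lambdamax]$ one has $\lambdamin\|y\|_{2}^{2} \le \|y\|_{A_i}^{2} \le \lambdamax\|y\|_{2}^{2}$ for every $y\in\R^d$. Summing over $i$ and over the weighted assignments $\xi_{ij}\omega_j$ of any clustering $C'$ gives
\begin{equation*}
\lambdamin \cdot \cost(X,C',S) \;\le\; \cost_{\apdmatrices}(X,C',S) \;\le\; \lambdamax \cdot \cost(X,C',S).
\end{equation*}
Applying the upper half to $C'=C$ and the lower half to $C'=C^{*}$, where $C^{*}$ is an optimum of $I(\apdmatrices)$, and using that $C^{*}_{\Ecc}$ is optimal for the Euclidean cost so that $\cost(X,C^{*}_{\Ecc},S)\le \cost(X,C^{*},S)$, I then chain
\begin{equation*}
\cost_{\apdmatrices}(X,C,S) \le \lambdamax\,\cost(X,C,S) \le \lambdamax(1+\epsilon)\gamma\,\cost(X,C^{*},S) \le (1+\epsilon)\gamma\,\frac{\lambdamax}{\lambdamin}\,\cost_{\apdmatrices}(X,C^{*},S),
\end{equation*}
which is exactly the claimed $(1+\epsilon)\gamma\,\lambdamax/\lambdamin$-approximation.

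There is no real obstacle here; the content of the theorem is that the restriction of the resolution-coreset construction to the isotropic setting (which was forced by our use of coordinate-separability in \cref{sec:proof1D,sec:proof-d}) is not a serious loss when one is willing to absorb the condition number of the family $\apdmatrices$. The only points to watch out for are verifying that the hypothesis $\gamma\ge \delta$ of \cref{prop:approximation_using_coreset} is satisfied (here $\delta=1$, and $\gamma\ge 1$ is automatic for any approximation ratio) and making sure that both inequalities of the eigenvalue sandwich are applied on the correct side (upper bound on $\cost_{\apdmatrices}(X,C,S)$, lower bound on $\cost_{\apdmatrices}(X,C^{*},S)$), so that $\lambdamax$ and $\lambdamin$ appear in the right places.
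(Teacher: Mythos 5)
Your proposal does not prove the statement it was asked to prove. The statement is \cref{prop:approximation_using_coreset} itself: if $(\coresetX,\coresetOmega)$ is an $(\nicefrac{\epsilon}{3},\delta)$-coreset and $\coresetC$ is a $\gamma$-approximation for the coreset instance with $\gamma\ge\delta$, then $\extension(\coresetC)$ is a $(1+\epsilon)\gamma$-approximation for $I$ --- with no condition-number factor. What you have written is instead a proof of \cref{th:approx-resolution}: your very first step invokes \cref{prop:approximation_using_coreset} as a black box, and you then attach the eigenvalue sandwich to pass from $\Ecc$ to $\apdmatrices$, ending with the $(1+\epsilon)\gamma\,\nicefrac{\lambdamax}{\lambdamin}$ bound. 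Read as a proof of the proposition, this is circular (its first ingredient is exactly the assertion to be established); read as a proof of \cref{th:approx-resolution}, it essentially reproduces the paper's argument in \cref{sec:approx-resolution-proof}, but that is a different statement.

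A proof of \cref{prop:approximation_using_coreset} works directly from the two defining inequalities of an $(\nicefrac{\epsilon}{3},\delta)$-coreset and involves no eigenvalues at all. Writing $C=\extension(\coresetC)$ and using, in this order, property \cref{def:property_a}, the hypothesis $\cost_\AK(\coresetX,\coresetC,S)\le\gamma\,\cost_\AK(\coresetX,S)$, the offset relation $\Delta^{+}\le\delta\,\Delta^{-}\le\gamma\,\Delta^{-}$ (this is precisely where the assumptions $\gamma\ge\delta$ and $\Delta^{-}\ge 0$ enter), and property \cref{def:property_b}, one obtains
\begin{align*}
\Bigl(1-\frac{\epsilon}{3}\Bigr)\cost_\AK(X,C,S) &\le \cost_\AK(\coresetX,\coresetC,S)+\Delta^{+} \le \gamma\,\cost_\AK(\coresetX,S)+\gamma\,\Delta^{-}\\
&\le \gamma\Bigl(1+\frac{\epsilon}{3}\Bigr)\cost_\AK(X,S),
\end{align*}
and the elementary estimate $(1+\nicefrac{\epsilon}{3})\big/(1-\nicefrac{\epsilon}{3})\le 1+\epsilon$ for $\epsilon\in(0,\nicefrac{1}{2}]$ finishes the argument. (The paper itself does not reprove this proposition but imports it from \cite[Thm.~3.5~(a)]{FG22}; the argument above is what that citation contains.) Your eigenvalue sandwich and the care you take with $\gamma\ge\delta$ and with which side of the sandwich is applied to which clustering are all correct and relevant --- but to \cref{th:approx-resolution}, not to the proposition at hand.
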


We can now prove \cref{th:approx-resolution}.
\medskip

{\normalfont\scshape Proof of \cref{th:approx-resolution}:}
First note that for a positive definite symmetric $(d\times d)$-matrix $A$ with smallest and largest eigenvalues $\lambda^{-}(A)$ and $\lambda^{+}(A)$, respectively, we have  
$$
\lambda^{-}(A)\cdot \norm{x}_2^{2} \le \norm{x}_{A}^{2} \le \lambda^{+}(A)\cdot\norm{x}_2^{2} \qquad (x\in \R^d).
$$
Recalling that  
\begin{equation*}
    \lambdamin = \min \bigl\{\lambda^{-}(A_i): i\in [k]\bigr\}, \qquad  \lambdamax = \max \bigl\{\lambda^{+}(A_i): i\in [k]\bigr\}.
\end{equation*}
we hence have
\begin{align*}
\cost_\AK(X,C,\apdsites) &= \displaystyle \sum_{i=1}^{k}\sum_{j=1}^{n} \xi_{ij} \omega_j \norm{x_{j}-s_{i}}_{\apdmatrix_i}^{2} \le 
\sum_{i=1}^{k}\sum_{j=1}^{n} \xi_{ij} \omega_j \lambdamax \norm{x_{j}-s_{i}}_{2}^{2}\\
& = \lambdamax \cdot \cost(X,C,\apdsites),
\end{align*}
and similarly,
\begin{equation*}
 \lambdamin \cdot \cost(X,C,\apdsites) \le \cost_\AK(X,C,\apdsites).
\end{equation*}
Since $C=\extension(\coresetC)$, \cref{prop:approximation_using_coreset} therefore yields
    \begin{align*}
\cost_\AK(X,C,S)& \le \lambdamax \cdot \cost(X,C,S) \le \lambdamax (1+\epsilon)\gamma \cdot \cost(X,S)\\
&\le (1+\epsilon)\gamma \frac{\lambdamax }{\lambdamin} \cdot \cost_\AK(X,S),
    \end{align*}
which completes the proof.

\section{Final remarks}\label{sec:conclusion}

As pointed out before, the size bounds for resolution coresets are superior to those for pencil coresets in small dimensions. Comparing the results of \cref{th:resolution-coreset} and \cref{prop:smaller_coreset} we see, however, that the number of clusters enters as $k^d$ and $k^2$, respectively. In fact, resolution coresets utilize the grid structure of $X$ but are themselves restricted to a coarser grid. Pencil coresets, on the other hand, produce very different point densities depending on the distance from the chosen vertices of the pencils. Since, for resolution coresets, we insist on a \emph{uniform} resolution on each line, we cannot employ the rationale leading to the reduced dependence on $k$ in \cref{prop:smaller_coreset}. It remains, however, an open question that is already relevant in dimension $3$ whether the term $k^d$ can be further reduced. Also, generalizations to Cartesian point sets with arbitrary positive weights would be interesting. Further, an analysis of nonuniform tilings of $[0,1]^d$ into axis parallel boxes would be relevant. This corresponds to adaptive thinning of Cartesian point sets and is interesting for fast image processing; see e.g. \cite{AF21} and the papers quoted there.

	\printbibliography

\end{document}